\newcommand\fin{\hfill $\Diamond$}
\newcommand\fini{\hfill $\clubsuit$}
\DeclareMathOperator{\End}{End}
\DeclareMathOperator{\Aut}{Aut}
\DeclareMathOperator{\Ima}{Im}
\DeclareMathOperator{\Ker}{Ker}
\DeclareMathOperator{\Coker}{Coker}
\DeclareMathOperator{\sym}{\mathbf{Sym}}
\DeclareMathOperator{\x}{\mathbf{x}}
\DeclareMathOperator{\e}{\epsilon}
\newtheorem{theorem}{Theorem}[section]
\newtheorem{proposition}[theorem]{Proposition}
\newtheorem{lemma}[theorem]{Lemma}
\newtheorem{corollary}[theorem]{Corollary}
\theoremstyle{definition}
\newtheorem{example}[theorem]{Example}
\newtheorem{definition}[theorem]{Definition}
\title{\textcolor{Blue}{On first-species counterpoint theory}}
\author[1]{Juan Sebastián Arias-Valero\thanks{\url{jsariasv1@gmail.com}}}
\author[2]{Octavio Alberto Agustín-Aquino\thanks{\url{octavioalberto@mixteco.utm.mx}}}
\author[1]{Emilio Lluis-Puebla\thanks{\url{lluisp@unam.mx}}}
\affil[1]{Departamento de Matemáticas, Universidad Nacional Autónoma de México, Ciudad de México, México}
\affil[2]{Instituto de Física y Matemáticas, Universidad Tecnológica de la Mixteca, Huajuapan de León, México}
\date{\vspace{-7ex}}
\begin{document}

\maketitle

\begin{quote}
\begin{small}
\textbf{Abstract:} We generalize first-species counterpoint theory to arbitrary \textit{rings} and obtain some new \textit{counting and maximization results} that enrich the theory of admitted successors, pointing to a \textit{structural approach}, beyond computations. The generalization encompasses an \textit{alternative theory} of contrapuntal intervals. We also propose several \textit{variations} of the model that intend to deepen into its principles. The original \textit{motivations} of the theory, as well as all \textit{technical passages}, are carefully reviewed so as to provide a complete exposition. 
\end{small}
\end{quote}

\begin{quote}
\begin{small}
\textbf{Keywords:} counterpoint; rings; modules; combinatorics 
\end{small}
\end{quote}

\begin{quote}
\begin{small}
\textit{2010 Mathematics Subject Classification:} 00A65; 13C99; 05A99 
\end{small}
\end{quote}

\tableofcontents

\section{Introduction}

This article is a theoretic exercise on a model of \textit{first-species counterpoint} and the mathematical support of the musicological and computational analysis in \cite{Musicology}. This kind of counterpoint is the simplest one and the didactic basis of \textit{Renaissance counterpoint}, as taught by Johann J. Fux in \cite{Fux}. The model was introduced by Mazzola in \cite{Inicount}, where the discussion is confined to the case when the ground ring is $\mathbb{Z}_{12}$; a ring that can be used to model the algebraic behavior of the twelve intervals between tones in the \textit{chromatic scale} of Western musical tradition. Then, the model was re-exposed with some additional computational results by Hichert in \cite[Part~VII]{MazzolaTopos}. Further generalizations to the case when the rings are of the form $\mathbb{Z}_{n}$ were considered in \cite{OAAAthesis,Junod}, and then included in a collaborative compendium of mathematical counterpoint theory and its computational aspects \cite{Octavio}. The motivation for such a generalization to $\mathbb{Z}_n$ was the existence of \textit{microtonal scales} with more than twelve tones, which have been used for making real music \cite{octaviotod}. 

The contributions of this article to the previous theory are the following.

\begin{itemize}
\item Complete re-exposition of the model with a careful review of all its principles, clarifying important points, like the contrapuntal symmetry definition (Section~\ref{model}). 
\item Generalization of the model to \textit{noncommutative rings}. This offers the possibility of defining counterpoint notions on noncommutative rings of generalized intervals, which can be worth exploring in musical terms; see Example~\ref{noncomex} and Section \ref{handnc}. 
\item Some theoretic advances on previous computational approaches. They include the determination of self-complementary dichotomies with the \textit{groupoid of intervals} (Section~\ref{sec3.1}), a criterion for the strength of dichotomies (Lemma~\ref{userigid}), the \textit{counting formulas} for successors sets cardinalities (Section~\ref{seccountfor}), a \textit{maximization criterion} for these cardinalities (Section~\ref{secmax}), and the derivative admitted successors computation, \textit{by hand}, for Renaissance counterpoint (Section~\ref{handnc}).
\item  Some \textit{variations} of the model. First, our generalization includes an alternative model of contrapuntal intervals, namely the \textit{product ring} $R\times R$, which takes the place of the dual numbers ring $R[\epsilon]$. These structures only differ in their products; the first one regards intervallic variations as elements of $R$, whereas the second one treats these variations as infinitesimals; see Section~\ref{countint}. The other two variations intend to deepen into the second principle of the model, and coincide in the case of Renaissance counterpoint (Section~\ref{vars}).
\end{itemize}

We organize this paper as follows. We start with the basic definitions: symmetries (natural operations in music) in Section~\ref{sec1}, hierarchy of dichotomies and polarities (consonance/dissonance partitions and their symmetries) in Section~\ref{sec2}, and  contrapuntal intervals rings in Section~\ref{sec3}. In particular, in Section \ref{sec3.4}, we prove that several dichotomies are strong. Then, in Sections~\ref{alternation}, \ref{locchar}, and \ref{variety} we discuss the principles that lead to a formal definition of first-species counterpoint in Section~\ref{sec5.3}. The remaining Section~\ref{model} develops the mathematical details of the principles. In Section~\ref{secsimp}, we simplify this definition, by reducing it to the case when the cantus firmus is $0$ and use a smaller group of symmetries. Based on this simplification, we prove that, for the dual numbers case, it is always possible to find successors of a given consonant interval. This result is the \textit{Little theorem of counterpoint} (Section~\ref{little}). A crucial lemma (Lemma~\ref{gencount}) leads us to the counting formulas and the maximization criterion in Section~\ref{seccountfor}. With these tools, in Section~\ref{hand}, we obtain the admitted successors of consonant intervals in Renaissance counterpoint. In Section~\ref{handnc}, we explore a noncommutative notion of counterpoint. In Section~\ref{loccharequiv}, we prove that our contrapuntal symmetry condition 2 is equivalent to Mazzola's original one in the commutative case, although it is weaker in general. Finally, in Section~\ref{vars} we discuss two conceptual variations of the model and close this article by proving that they coincide in the case of the Renaissance dichotomy. The appendix includes the proof of a variation of the \textit{rearrangement inequality}, which is crucial for the maximization criterion.

Throughout this paper, $R$ denotes an arbitrary ring with unity, not necessarily commutative. When needed, commutativity hypothesis on $R$ is explicitly stated. Similarly, modules considered in this paper are right $R$-modules, unless we indicate otherwise.

\section{Ring symmetries}\label{sec1}

\textit{Affine homomorphisms are the natural correspondences that occur in music}. They are the formalization that encompasses musical transformations like transposition and inversion in the ring $\mathbb{Z}_{12}$ of tones.\footnote{The ring $\mathbb{Z}_{12}$ models the twelve tones of the chromatic scale and, at the same time, the twelve intervals between these tones. The distinction is usually clear from the context.} 

Let $R$ be a ring. Given two $R$-modules $M$ and $N$, an \textit{affine homomorphism} from $M$ to $N$ is the composite $e^a\circ f$, where $f:M\longrightarrow N$ is an $R$-homomorphism, $a\in N$, and $e^a:N\longrightarrow N:x\mapsto x+a$ is the \textit{translation} associated with $a$. We write a typical affine homomorphism $e^a\circ f$ as $e^af$, for short. Observe that the composition rule for affine homomorphisms is $(e^af)\circ (e^bg)=e^{f(b)+a}fg$.

The monoid (with respect to the composition) $\End_R(M)$ of $R$-endomorphisms of an $R$-module $M$ is a ring with the usual sum of homomorphisms. However, the monoid of affine endomorphisms of $M$ is not a ring since the distributivity fails, except when $M$ is the trivial module. We denote by $\Aut_R(M)$ the group of $R$-automorphisms of an $R$-module $M$. 

An affine homomorphism $e^af:M\longrightarrow N$ is an isomorphism if and only if $f$ is an $R$-isomorphism.\footnote{First, suppose that $e^b g:N\longrightarrow M$ is the inverse of $e^a f$. Then $(e^af)(e^bg)=e^{f(b)+a}fg=id_N$ and $(e^bg)(e^a f)=e^{g(a)+b}gf=id_M$. Therefore, $fg=id_N$, $gf=id_M$, and $b=-g(a)$. Conversely, if $fg=id_N$ and $gf=id_M$,  then $(e^af)(e^{-g(a)}g)=id_N$ and $(e^{-g(a)}g)(e^af)=id_M$.} Thus, affine automorphisms of $R$ are of the form $e^af$, where $f\in \Aut_R(R)$ and $a\in R$. On the other hand, recall that there is a ring isomorphism
\begin{equation*}
\End_R(R)\cong R,
\end{equation*}
which identifies an element $r\in R$ with the $R$-endomorphism obtained by left multiplication with $r$. This isomorphism restricts to a group isomorphism  
\begin{equation*}
\Aut_R(R)\cong R^*,
\end{equation*}
which establishes a bijective correspondence between $R$-automorphisms of $R$ and invertible elements of $R$. From this discussion we conclude that an affine automorphism of $R$ is the function associated with the linear polynomial $bx+a$, where $b\in R^*$ and $a\in R$. 

In what follows, we will call affine automorphisms of $R$  \textbf{symmetries} of $R$ and denote the group of symmetries of a ring $R$ by $\sym(R)$. We use the notation $e^ab$ for the symmetry associated with $bx+a$. Two symmetries $e^ab$ and $e^{a'}b'$ are equal if and only if $a=a'$ and $b=b'$. Observe that $e^01$ is the identity symmetry and that the composition of symmetries is given by the formula
\begin{equation}\label{Eq:CompositionOfSymmetries}
e^ab\circ e^{a'}b'=e^{ba'+a}bb'.
\end{equation}
The inverse of a symmetry $e^ab$ is $e^{-b^{-1}a}b^{-1}$.
\section{Basic motivations and definitions}\label{sec2}

In Renaissance counterpoint we divide the ring $\mathbb{Z}_{12}$ of \textbf{intervals} into two disjoint subsets, namely the set $K$ of \textit{consonances} and the set $D$ of \textit{dissonances}. The consonances are unison, minor third, major third, perfect fifth, minor sixth, and major sixth. The dissonances are minor second, major second, perfect fourth, tritone, minor seventh, and major seventh. The corresponding mathematical definitions are 
\[K=\{0,3,4,7,8,9\}\text{ and }D=\{1,2,5,6,10,11\}.\] 
A composition of \textit{first-species counterpoint} consists of two voices, \textit{cantus firmus} and \textit{discantus}, whose notes have the same length and where each interval between the voices is a consonance. See \cite[pp.~19-29]{Fux} and \cite[Section~2]{Musicology} for details.

The germ of Mazzola's model is the observation that there is a \textit{unique} symmetry $p$ of the ring $\mathbb{Z}_{12}$, namely $e^25$, that sends consonances to dissonances. In fact, 
\[e^25(0)=2,\ e^25(3)=5,\ e^25(4)=10,\ e^25(7)=1, \ e^25(8)=6, \ e^25(9)=11.\]
We postpone the proof of the uniqueness to Section~\ref{sec3.4}. Moreover, the conditions that $e^2 5$ is the unique symmetry in $\sym(\mathbb{Z}_{12})$ sending $K$ to $D$ and that $7K$ is a multiplicative monoid \textit{characterize} the partition $\{K,D\}$ according to \cite[Section~13.1]{Beau}. 

Up to now, we have the intervals, consonances, and dissonances of counterpoint, but we need to model the voices in a composition. We achieve this by means of the \textbf{countrapuntal intervals} ring $\mathbb{Z}_{12}[\x]$, which consists of all linear polynomials $c+d\x$, with $c,d\in \mathbb{Z}_{12}$, in an indeterminate $\x$ satisfying certain relation.\footnote{This relation is $\x^2=\alpha \x$ for some constant $\alpha\in\{0,1\}$. See Section~\ref{sec3} for details.} A contrapuntal interval $c+d\x$, represents a pitch class $c$, of the cantus firmus, together with the interval $d$ between $c$ and the pitch class $c+d$ from the superior discantus. As already said, in a piece of Renaissance counterpoint, it is mandatory that $d$ be a consonance. For instance, the  contrapuntal interval $2+7\x$ in $\mathbb{Z}_{12}[\x]$ comes from the following musical example. 

\includegraphics{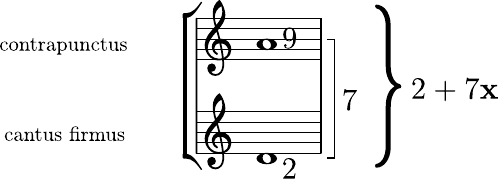}

We have a partition of the ring $\mathbb{Z}_{12}[\x]$ into \textbf{contrapuntal consonances and dissonances}, namely $\{K[\x],D[\x]\}$, where $X[\x]$ consists of all $c+d\x$ with $d\in X$ for $X=K,D$. We would want properties for this partition analogous to those of $\{K,D\}$. Certainly, the symmetry $e^{2\x} 5$ of $\mathbb{Z}_{12}[\x]$ is a quite natural\footnote{Precise statements of this naturality claim are Propositions~\ref{unique0} and \ref{unique0ori}.} extension of $e^2 5$ sending $K[\x]$ to $D[\x]$ since $e^{2\x}5$ is simple and acts on the interval part $d$ of a contrapuntal interval $c+d\x$ just as $e^25$. But in this case \textit{it is not the unique} sending $K[\x]$ to $D[\x]$. For example, $e^{2\x+1} 5$ also does. However, there is an induced \textit{local uniqueness property} that characterizes the partition $\{K[\x],D[\x]\}$, as discussed in Section~\ref{locchar}.

The first-species compositions deal with finite sequences of contrapuntal consonances with cantus firmus and discantus in a given \textit{scale}. However, not all sequences are valid since counterpoint has some \textit{rules}. In the model to be exposed in the following sections, we aim to predict when a given contrapuntal consonances $\eta$ can follow another one $\xi$, by means of the \textit{admitted successor} concept. We base the latter on the \textit{alternation principle} (Section~\ref{alternation}), the local uniqueness property (Section~\ref{locchar}), and the \textit{variety} principle of counterpoint (Section~\ref{variety}). 

\subsection{Dichotomies}\label{sec3.1}

We start to develop our theory by formalizing the common properties of the rings of intervals $\mathbb{Z}_{12}$ and of $\mathbb{Z}_{12}[\x]$ that are relevant for counterpoint.

Let $R$ be a ring.
\begin{itemize}
\item A partition $\{K,D\}$ of $R$ is a \textbf{dichotomy} of $R$ if $|K|=|D|$. Note that a finite ring $R$ has dichotomies if and only if its cardinality is even. 
\item A \textbf{self-complementary dichotomy}\footnote{This definition is essentially the same of an \textbf{autocomplementary} dichotomy in \cite[Definition~92]{MazzolaTopos}.} of $R$ is a triple $(K,D,p)$, where $\{K,D\}$ is a partition of $R$, $p$ is a symmetry of $R$, and $p(K)=D$. Note that necessarily $\{K,D\}$ is a dichotomy of $R$.
\end{itemize}

The triples $(K,D,e^25)$ and $(K[\x],D[\x],e^{2\x}5)$ from Renaissance counterpoint are self-complementary dichotomies. The dichotomy $\{\{0,1,3,6,8,11\},\{2,4,5,7,9,10\}\}$ of $\mathbb{Z}_{12}$ is not part of a self-complementary dichotomy of $\mathbb{Z}_{12}$ as proved at the end of this section. 

We would like to construct self-complementary dichotomies other than the basic one of Renaissance counterpoint, so as to create new notions of counterpoint. Given an arbitrary finite ring $R$ of even cardinality, we could start with a symmetry $p$ of $R$ and aim to construct a self-complementary dichotomy $(K,D,p)$. We pick some $k_1\in R$ to be in $K$, and define $d_1:=p(k_1)$, with $d_1$ to be in $D$. Then, if $|R|>2$, we take some $k_2\in R\setminus \{k_1,d_1\}$ to be in $K$ and define $d_2:=p(k_2)$ to be in $D\setminus \{d_1\}$, and so on. Note that this process produces a dichotomy $\{K,D\}$ provided \textit{$p$ has no fixed points} and is finite since $R$ is. We say that a symmetry $p$ without fixed points is a \textbf{derangement}.

The following proposition contains some concluding properties of self-complementary dichotomies.

\begin{proposition}\label{propsymdich1} Each self-complementary dichotomy $(K,D,p)$ of $R$ has the following properties:
\begin{enumerate}
\item The identity $p(D)=K$ holds, so $(D,K,p)$ is a self-complementary dichotomy.
\item The symmetry $p$ is a derangement. 
\end{enumerate}
\end{proposition}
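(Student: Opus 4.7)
The plan is to exploit two standing facts: a symmetry of $R$ is a bijection of the underlying set $R$, and $\{K,D\}$ is by assumption a partition. For part (1), I would observe that applying the bijection $p$ to the partition $\{K,D\}$ produces another partition $\{p(K),p(D)\}$ of $R$: the union covers $R$ because $p$ is surjective, and the two images are disjoint because $p$ is injective and $K\cap D=\emptyset$. Since the hypothesis gives $p(K)=D$, the only possibility for the other block is $p(D)=R\setminus D=K$. With this identity, $(D,K,p)$ satisfies all three clauses of the definition of a self-complementary dichotomy essentially by inspection.

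For part (2), I would proceed by contradiction. Suppose $p$ admits a fixed point $x\in R$. Because $\{K,D\}$ is a partition, $x$ lies in exactly one of the two blocks. If $x\in K$, then $x=p(x)\in p(K)=D$, contradicting $K\cap D=\emptyset$; if $x\in D$, the same contradiction is obtained after invoking part (1) to get $p(x)\in p(D)=K$.

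I do not anticipate any genuine obstacle: the statement is an unwinding of the definitions. The one mildly substantive point worth flagging is that part (1) should be argued from injectivity of $p$ rather than from cardinality, so that the proof is valid whether $R$ is finite or infinite; the condition $|K|=|D|$ in the definition of a dichotomy then plays no role in this proposition and is in fact automatic once $p(K)=D$ holds.
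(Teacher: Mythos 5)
Your proposal is correct and follows essentially the same route as the paper: part (1) rests on the fact that a bijection carries the complement $R\setminus K$ to $R\setminus p(K)$ (your partition phrasing is the same argument), and part (2) is the same two-case check on where a putative fixed point would lie. Your closing remark that injectivity, not cardinality, drives part (1) matches the paper's reasoning as well.
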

\begin{proof}     
1. Since $p$ is a bijection, the properties of the inverse image imply that $p(D)=p(R\setminus K)=R\setminus p(K)=R\setminus D=K$.
2. Let $x\in R$. If $x\in K$, then $x\neq p(x)\in D$. If $x\in D$, then, by 1, $x\neq p(x)\in K$.
\end{proof}

Thus, \textit{self-complementary dichotomies are strongly related to derangements}.

The following is a short reflection about the problem of deciding when a dichotomy $\{K,D\}$ of a ring $R$ is not self-complementary. 

Let $X\subseteq R$. We associate with $X$ a categorical \textbf{groupoid of intervals} $\mathcal{G}(X)$ as follows. Its set of objects is $X$. For each $x,y\in X$, there is a unique morphism from $x$ to $y$, namely the triple $(x,y,y-x)$. We define the composition by  \[(y,z,z-y)\circ (x,y,y-x)=(x,z,z-y+y-x)=(x,z,z-x).\] The identities are of the form $(x,x,0)$. The inverse of $(x,y,y-x)$ is $(y,x,x-y)$. 

Given a self-complementary dichotomy $(K,D,e^ab)$ of $R$, we have an induced groupoid isomorphism $F:\mathcal{G}(K)\longrightarrow \mathcal{G}(D)$ sending\footnote{It is the unique possible definition if the correspondence on objects is $e^ab$.} $(x,y,y-x)$ to $(e^ab(x),e^ab(y),b(y-x))$, which acts linearly on intervallic variations. Hence, \textit{a self-complementary dichotomy induces a linear correspondence between the intervals of $K$ and $D$}. 

This fact can be used to prove that the dichotomy $\{K,D\}$ with $K=\{0,1,3,6,8,11\}$ and $D=\{2,4,5,7,9,10\}$ is not self-complementary. In fact, the sequence $(11,0,1)$ of elements of $K$ has the maximum length among sequences $(x_1,\dots ,x_n)$ such that $x_{i+1}-x_i=1$ for $i=0,\dots n-1$. If a symmetry $e^ab$ sends $K$ to $D$, then the groupoid isomorphism $F$ sends $(11,0,1)$ to a sequence $(d_1,d_2,d_3)$ in $D$ with maximum length among sequences $(y_1,\dots ,y_n)$ such that $y_{i+1}-y_{i}=b$. But the sequences of maximum length with $y_{i+1}-y_{i}=b$ are $(4,5)$ and $(9,10)$(for $b=1$), $(5,4)$ and $(10,9)$(for $b=11$), $(4,9,2,7)$ (for $b=5$), and $(7,2,9,4)$ (for $b=7$), whose length is not $3$; a contradiction. 

\subsection{Strong dichotomies and polarities}\label{sec3.2}

We formalize the additional uniqueness property of the Renaissance dichotomy $(K,D,e^25)$ with the following definitions. Let $R$ be a ring. A self-complementary dichotomy $(K,D,p)$ of $R$ is a \textbf{strong dichotomy} of $R$ if $p$ is the unique symmetry of $R$ such that $p(K)=D$. In such a case, we also say that $p$ is a \textbf{polarity}.

As we will show in the next sections, once we have a strong dichotomy of a ring (made up of generalized intervals), we have an induced self-complementary dichotomy of the contrapuntal intervals ring, and an associated theory of admitted successors. Thus, strong dichotomies lead to new \textit{counterpoint worlds} for composing non-traditional counterpoint.

We need to determine whether a given self-complementary dichotomy $(K,D,p)$ is strong. We approach this problem by determining the number of symmetries sending $K$ to $D$. Given another symmetry $q$ with $q(K)=D$, note that 
\[q^{-1}\circ p(K)=q^{-1}(D)=K\]
and hence $q^{-1}\circ p$ is in the stabilizer $\theta(K)$, where
\[\theta(K):=\{g \in \sym(R)\ |\ g(K)=K\}.\] 
This suggests the right action of $\theta(K)$, by composition, on the set $\sym(K,D)$ of symmetries of $R$ sending $K$ to $D$, as shown in the following diagram.
\[\begin{array}{cccc}
\circ :&\sym(K,D) \times \theta(K)& \longrightarrow& \sym(K,D)\\
 &(q,g) &\longmapsto & q\circ g
\end{array}\]
As we have observed, every $q\in \sym(K,D)$ is in the same orbit of $p$. Also, the stabilizer of $p$ under this action is the identity since $p\circ g=p$ implies $g=id_R$. Hence, the following proposition.

\begin{proposition}\label{corstab}
Let $(K,D,p)$ be a self-complementary dichotomy of a ring $R$. There is a bijective correspondence between $\sym(K,D)$ and $\theta(K)$. The bijection sends an element $g\in \theta(K)$ to $ p\circ g$.
\end{proposition}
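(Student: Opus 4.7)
The plan is to verify directly that the assignment $\phi: \theta(K) \longrightarrow \sym(K,D)$ given by $\phi(g) := p \circ g$ is a well-defined bijection, as suggested by the group-action discussion preceding the statement. This can be done either by exhibiting an explicit two-sided inverse or by checking injectivity and surjectivity separately; I would prefer the latter because it makes transparent which ingredient of the setup is used at each step.

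First I would check that $\phi$ is well-defined. If $g \in \theta(K)$, then $g(K) = K$, so $(p \circ g)(K) = p(g(K)) = p(K) = D$, which shows that $p \circ g$ belongs to $\sym(K,D)$. Injectivity is then immediate: if $p \circ g = p \circ g'$, composing on the left with $p^{-1}$ (which exists since $p$ is a symmetry) yields $g = g'$.

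The only step that uses anything beyond the definitions is surjectivity, and this is where Proposition~\ref{propsymdich1} enters. Given $q \in \sym(K,D)$, I would set $g := p^{-1} \circ q$ and check that $g \in \theta(K)$. Since $p(K) = D$, applying $p^{-1}$ gives $p^{-1}(D) = K$; hence $g(K) = p^{-1}(q(K)) = p^{-1}(D) = K$, so $g \in \theta(K)$. By construction, $\phi(g) = p \circ p^{-1} \circ q = q$, establishing surjectivity.

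There is no real obstacle here; the statement is essentially a formal restatement of the orbit-stabilizer phenomenon already noted in the paragraph preceding the proposition, namely that $\theta(K)$ acts freely and transitively on $\sym(K,D)$ via right composition. If one prefers a more compact presentation, the proof can be condensed to a single line by observing that $g \mapsto p \circ g$ and $q \mapsto p^{-1} \circ q$ are mutually inverse bijections between $\theta(K)$ and $\sym(K,D)$, with the only non-tautological check being the identity $p^{-1}(D) = K$ supplied by Proposition~\ref{propsymdich1}.
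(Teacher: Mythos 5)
Your proof is correct and follows essentially the same route as the paper: the paper's one-line argument (surjectivity from transitivity of the right action, injectivity from triviality of the stabilizer of $p$) is exactly your direct verification that $g\mapsto p\circ g$ and $q\mapsto p^{-1}\circ q$ are mutually inverse, with the key check $p^{-1}\circ q\in\theta(K)$ matching the paper's observation that $q^{-1}\circ p\in\theta(K)$. One tiny remark: the fact you need, $p^{-1}(D)=K$, follows directly from $p(K)=D$ and bijectivity (as you in fact derive), so the appeal to Proposition~\ref{propsymdich1} (which states $p(D)=K$) is not needed.
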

\begin{proof}
The function $\theta(K)\longrightarrow \sym(K,D): g\mapsto p\circ g$ is surjective since the action is transitive and is injective since the stabilizer of $p$ is trivial.
\end{proof}

Thus, we have reduced the study of $\sym(K,D)$ to that of the stabilizer group $\theta(K)$ of $K$. Besides, we have determined when a self-complementary dichotomy is strong, as established in the following corollary. We say that a partition $\{K,D\}$ of $R$ is \textbf{rigid} if $\theta(K)$ is the trivial group. Similarly, we say that a subset $K$ of $R$ is \textbf{rigid} if $\theta(K)$ is trivial. Note that in a rigid partition $\{K,D\}$ of $R$ both $K$ and $D$ are rigid.
\begin{corollary}\label{rig}
A self-complementary dichotomy $(K,D,p)$ is a strong dichotomy if and only if $\{K,D\}$ is rigid.
\end{corollary}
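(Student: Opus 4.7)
The plan is to apply the bijective correspondence between $\theta(K)$ and $\sym(K,D)$ established in the preceding proposition. By definition, $(K,D,p)$ is a strong dichotomy exactly when $\sym(K,D) = \{p\}$, that is, when $|\sym(K,D)| = 1$. Likewise, $\{K,D\}$ is rigid precisely when $|\theta(K)| = 1$. Since the two sets are in bijection, their cardinalities coincide, so one equals $1$ if and only if the other does.

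To write this out cleanly, I would first invoke the bijection $\theta(K) \longrightarrow \sym(K,D) : g \mapsto p \circ g$ to note that $|\sym(K,D)| = |\theta(K)|$ (these sets are nonempty, since $id_R \in \theta(K)$ and $p \in \sym(K,D)$). Then I would argue both directions explicitly: if $(K,D,p)$ is strong, any $g \in \theta(K)$ produces $p \circ g \in \sym(K,D) = \{p\}$, forcing $g = id_R$; conversely, if $\{K,D\}$ is rigid, any $q \in \sym(K,D)$ satisfies $p^{-1}\circ q \in \theta(K) = \{id_R\}$, so $q = p$.

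The argument is essentially a one-line consequence of the preceding proposition, so there is no real obstacle; the only thing worth being careful about is not conflating ``strong'' (a uniqueness statement about the symmetry $p$) with ``rigid'' (a triviality statement about a stabilizer group) before the bijection is invoked. I would therefore make the translation between the two conditions via the bijection fully explicit rather than leave it as an appeal to cardinalities.
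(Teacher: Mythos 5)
Your proof is correct and is exactly the argument the paper intends: the corollary is stated as an immediate consequence of the preceding proposition, using the bijection $\theta(K)\longrightarrow \sym(K,D): g\mapsto p\circ g$ to translate uniqueness of $p$ into triviality of $\theta(K)$. Your explicit two-direction verification matches this; no issues.
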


Now, our objective is to prove a pair of lemmas that help us to decide whether a given dichotomy is rigid, by discarding non-identity symmetries that could be in $\theta(K)$. These lemmas could be a first approximation to a more conceptual argument.\footnote{This conceptual argument can be related to the groupoid of intervals of $K$, briefly introduced in Section~\ref{sec3.1}.} Consider the left action $\cdot$, by multiplication, of the group $R^*$ on $R$. 

\begin{lemma}\label{userigid}
Let $\{K,D\}$ be a dichotomy of a ring $R$. Suppose that $a\in R$ and that $C$ is either $K$ or $D$. If there is $r\in R$ such that 
\begin{enumerate}
\item the orbit $R^*r$ of $r$ under the action $\cdot$ is contained in $C$ and 
\item $r+a\notin C$,
\end{enumerate}
then $e^ab\notin \theta(K)$ for each $b\in R^*$. 
\end{lemma}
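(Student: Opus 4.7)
The plan is to show, for an arbitrary $b\in R^{*}$, that the symmetry $e^{a}b$ moves some specific element out of its piece of the partition, so it cannot lie in $\theta(K)$. Since $e^{a}b$ acts by $x\mapsto bx+a$, the natural element to plug in is the preimage of $r+a$ under the linear part, namely $x=b^{-1}r$; this is the choice that cancels $b$ and leaves only the translation by $a$.

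First, I would observe that because $b^{-1}\in R^{*}$, the element $b^{-1}r$ lies in the orbit $R^{*}r$ and therefore in $C$ by hypothesis (1). A one-line computation gives $e^{a}b(b^{-1}r)=b\cdot b^{-1}r+a=r+a$, which by hypothesis (2) does not lie in $C$. Thus $e^{a}b$ sends an element of $C$ into its complement, so $e^{a}b(C)\neq C$.

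Finally, I would conclude by a short case split on $C$. If $C=K$, this is already $e^{a}b\notin \theta(K)$. If $C=D$, I would use the fact that any bijection $g$ of $R$ with $g(K)=K$ automatically satisfies $g(D)=D$, since $\{K,D\}$ partitions $R$; hence $\theta(K)=\theta(D)$, and the argument above shows $e^{a}b\notin\theta(D)=\theta(K)$.

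There is no serious obstacle; the essential content is the observation that $x=b^{-1}r$ inverts the multiplicative part of $e^{a}b$, reducing the test to whether $r+a$ stays in $C$. The only care needed is to give $K$ and $D$ symmetric treatment, which is exactly what the hypothesis on $C$ permits.
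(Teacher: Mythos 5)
Your proof is correct and is essentially the paper's own argument: both evaluate $e^{a}b$ at $b^{-1}r\in R^{*}r\subseteq C$ to land on $r+a\notin C$, then use $\theta(C)=\theta(K)$ (which the paper invokes implicitly and you justify explicitly). No further comment needed.
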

\begin{proof}
If $r+a\notin C$ for $r$ as above, then for each $b\in R^*$, $b(b^{-1}\cdot r)+a\notin C$ with $b^{-1}\cdot r\in R^*r\subseteq C$, and hence $e^ab\notin \theta(C)=\theta(K)$. 
\end{proof}

In particular, if $r=0$, we obtain the following result.

\begin{lemma}\label{disc0}
Suppose that $C$ is either $K$ or $D$ and that $0\in C$. Each symmetry of the form $e^ab$ with $a\in R\setminus C$ is not in $\theta(K)$. In particular, $e^ab\notin \theta(K)$.
\end{lemma}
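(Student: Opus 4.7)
The plan is to derive this lemma as an immediate specialization of Lemma~\ref{userigid} to the case $r = 0$. This is essentially a convenience corollary; its interest lies not in depth but in the fact that its hypotheses are trivially verifiable.

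First, I would verify that $r = 0$ satisfies the two conditions in the hypothesis of Lemma~\ref{userigid} under the present assumptions. For condition~(1), the orbit $R^*\cdot 0 = \{0\}$ is contained in $C$ because of the standing hypothesis $0 \in C$. For condition~(2), observe that $r + a = 0 + a = a$, which by hypothesis lies in $R \setminus C$. Both conditions being met, Lemma~\ref{userigid} directly yields $e^ab \notin \theta(K)$ for every $b \in R^*$, which is the conclusion.

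The final ``in particular'' clause I would treat as another immediate specialization of the same principle, namely by taking $C$ to be the block of the dichotomy that happens to contain $0$ and then picking $a$ from the complementary block, so that $a \in R \setminus C$ holds automatically.

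There is no real obstacle to overcome here: the content of the lemma is that the easy choice $r = 0$ in Lemma~\ref{userigid} already produces a useful criterion. Its practical value (which I would stress in one sentence after the proof) is that whenever the dichotomy under study has $0$ in one of its blocks---as is always arranged in the musically interesting cases, since $0$ represents the unison---every affine symmetry with translation part in the opposite block is instantly excluded from $\theta(K)$, considerably shrinking the set of candidates whose membership in $\theta(K)$ still needs to be tested by other means.
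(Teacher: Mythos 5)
Your proposal is correct and matches the paper's own treatment: the paper derives Lemma~\ref{disc0} precisely as the specialization $r=0$ of Lemma~\ref{userigid}, since the orbit $R^*\cdot 0=\{0\}\subseteq C$ and $0+a=a\notin C$. Your verification of the two hypotheses and your reading of the final clause are consistent with this, so there is nothing to add.
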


Before using these lemmas to check that some dichotomies are strong, we first study a possible way to construct self-complementary dichotomies that are good candidates to strong ones.

\subsection{Constructing strong dichotomies: quasipolarities}\label{sec3.3}

A symmetry $p$ of a ring $R$ is \textbf{involutive} if $p\circ p=id$. If $p=e^ab$, by Equation \eqref{Eq:CompositionOfSymmetries}, this condition is equivalent to $e^{ba+a} b^2=e^0 1$, that is, to $ba+a=0$ and $b^2=1$. In the next proposition we prove that all polarities are involutive, so if we want a polarity, we could start with an involutive derangement, then define a suitable self-complementary dichotomy following the strategy in Section~\ref{sec3.1}, and finally check whether the uniqueness property holds. 
 
\begin{proposition}\label{involut} If a self-complementary dichotomy $(K,D,p)$ is strong, then $p$ is involutive.
\end{proposition}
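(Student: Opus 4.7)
The plan is to exploit the uniqueness built into strongness by producing another symmetry that sends $K$ to $D$ and then identifying it with $p$.

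First I would invoke Proposition~\ref{propsymdich1}, which tells us that $p(D)=K$ whenever $(K,D,p)$ is self-complementary. Since $p$ is a symmetry of $R$, it is in particular a bijection, so the inverse symmetry $p^{-1}$ exists in $\sym(R)$.

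Next I would apply $p^{-1}$ to both sides of $p(D)=K$, obtaining $p^{-1}(K)=D$. This exhibits $p^{-1}$ as an element of $\sym(K,D)$. Since $(K,D,p)$ is strong, $p$ is the unique such symmetry, and therefore $p^{-1}=p$, i.e.\ $p\circ p=\mathrm{id}$.

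There really is no obstacle here: the whole argument hinges on the observation that $p^{-1}$ automatically lies in $\sym(K,D)$ because of part~1 of Proposition~\ref{propsymdich1}, and then uniqueness forces $p=p^{-1}$. The characterization via $e^ab$ (giving $ba+a=0$ and $b^2=1$) is not actually needed for the implication; it would only be relevant if we wanted to translate involutivity into an explicit condition on the coefficients $a$ and $b$, which is not what the proposition asks for.
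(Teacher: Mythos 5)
Your argument is correct and is essentially the paper's own: both proofs start from $p(D)=K$ (part 1 of Proposition~\ref{propsymdich1}) and then invoke the strongness hypothesis to force involutivity. The only cosmetic difference is that you apply the uniqueness of the polarity directly to $p^{-1}\in\sym(K,D)$, whereas the paper applies the equivalent rigidity formulation (Corollary~\ref{rig}) to $p\circ p\in\theta(K)$; these amount to the same step.
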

\begin{proof}     
If $(K,D,p)$ is a self-complementary dichotomy and $p$ has the uniqueness property, then $p\circ p(K)=p(D)=K$ by 1 in Proposition~\ref{propsymdich1}, and hence $p\circ p=id$ by Corollary~\ref{rig}. 
\end{proof}

For example, the Renaissance polarity $e^2 5$ is involutive. The converse of this proposition does not hold since, in the Renaissance dichotomy $(K[\x],D[\x],e^{2\x}5)$, $e^{2\x}5$ is involutive, but the dichotomy is not strong as already observed.

We call involutive derangements \textbf{quasipolarities}. A \textbf{quasipolarization} of $R$ is a self-complementary dichotomy $(K,D,p)$ of $R$ where $p$ is involutive. Therefore, in view of Proposition~\ref{propsymdich1}, a self-complementary dichotomy $(K,D,p)$ is a quasipolarization if and only if $p$ is a quasipolarity.

Note that not all self-complementary dichotomies are quasipolarizations. For example, the self-complementary dichotomy $(\{0,2,4,6,8,10\},\{1,3,5,7,9,11\},e^1)$ of $\mathbb{Z}_{12}$ is not a quasipolarization since $e^1\circ e^1=e^2\neq id$. 

Thus, the construction of quasipolarizations is a good beginning if we want a strong dichotomy, though quasipolarizations need not be strong dichotomies.

\subsection{Examples of strong dichotomies}\label{sec3.4}

In this section, we use Lemmas \ref{userigid} and \ref{disc0} to open up a series of counterpoint worlds. First, consider the case when $R$ is the commutative ring $\mathbb{Z}_{12}$ and different examples of quasipolarizations that we will check to be strong dichotomies. The set of orbits of the action of the group $\{1,5,7,11\}$ of invertible elements of $\mathbb{Z}_{12}$ on $\mathbb{Z}_{12}$ is 
\[\{\{0\}, \{1,5,7,11\}, \{2,10\}, \{3,9\}, \{4,8\}, \{6\}\}.\]

In the following example we prove the claim that the Renaissance quasipolarization is a strong dichotomy.
\begin{example}[\textbf{Renaissance counterpoint}]
Consider the dichotomy $(K,D,e^25)$, where $K=\{0,3,4,7,8,9\}$ and $D=\{1,2,5,6,10,11\}$. Let us prove that the dichotomy is strong. According to Corollary~\ref{rig}, it is enough to show that our dichotomy is rigid, or equivalently, that $\theta(K)=\{e^01\}$. In fact, by Lemma~\ref{disc0}, $e^ab\notin \theta(K)$ for each symmetry $e^ab$ with $a\in D$. The cases when $a\in K$ remain. If $a=0$, then since $5\times 7=11\notin K$, $7\times 7=1\notin K$, and $11\times 7=5\notin K$, we conclude that the symmetries of the form $e^0b$ with $b=5,7,9$ are not in $\theta(K)$. Finally, since
\[3+8=8+3=11\notin{K},\ 4+9=9+4=1\notin{K}\text{, and } 3+7\notin{K},\]
Lemma~\ref{userigid} implies that all symmetries $e^ab$ with $a\in \{3,4,7,8,9\}$ are not in $\theta(K)$. We have exhausted all possibilities, except the identity, and hence $\theta(K)$ is trivial.\fin
\end{example}

\begin{example}[\textbf{Scriabin's mystic chord}, see {\cite{Scriabinworld}}]
Consider the dichotomy $(K,D,e^5 11)$, where $K=\{0,2,4,6,9,10\}$ and $D=\{1,3,5,7,8,11\}$. Let us show that $e^5 11$ is a polarity by using the same strategy of the preceding example. 

As before, $e^ab\notin \theta(K)$ for each symmetry $e^ab$ with $a\in D$, and we need to discard the cases when $a\in K$. If $a=0$, then since $5\times 4=8\notin K$, $7\times 9=3\notin K$, and $11\times 4=8\notin K$, the symmetries of the form $e^0b$ with $b=5,7,11$ are not in $\theta(K)$. Also, since
\[6+2=2+6=8\notin{K},\ 5+4=9\notin{D},\ 2+9=11\notin{K}\text{, and } 10+10=8\notin{K},\]
Lemma~\ref{userigid} implies that all symmetries $e^ab$ with $a\in \{2,4,6,9,10\}$ are not in $\theta(K)$. Hence, $\theta(K)$ is trivial. \fin
\end{example}

The next computation establishes the existence of at least a strong dichotomy in $\mathbb{Z}_{2k}$ for each $k\geq 3$. These dichotomies are the beginning of a lot of microtonal counterpoint worlds, which were first studied in \cite{OAAAthesis}. Recall that all invertible elements of $\mathbb{Z}_{2k}$ are odd since they are coprime with $2k$.

\begin{example}[Cf. {\cite[Proposition 2.1]{Octavio}} and {\cite[Proposition 2.6]{OAAAthesis}}] 
Let\footnote{Thanks to the hypothesis $k\geq 3$, the six distinct elements $0$, $1$, $3$, $-1$, $2$, and $4$ are in $\mathbb{Z}_{2k}$.} $k\geq 3$. In $\mathbb{Z}_{2k}$, consider the dichotomy $(K,D,e^{-1}(-1))$, where \[K=\{0,1,3,..., 2k-5,2k-3\}  \text{ and } D=\{-1,2k-2,2k-4,...,4,2\}.\] 

We already know (Lemma~\ref{disc0}) that $e^ab\notin  \theta(K)$ for each symmetry $e^ab$ with $a\in D$. If $a=0$, then $b(-1)=-b\neq -1$ with $-b$ odd whenever $b\in\mathbb{Z}_{2k}^*\setminus \{1\}$ and hence $-b\notin D$, so the symmetries of the form $e^0b$ with $b\in\mathbb{Z}_{2k}^*\setminus \{1\}$ are not in $\theta(D)=\theta(K)$. Also, since all orbits of even numbers, except the orbit $\{0\}$ of $0$, under the action of $\mathbb{Z}_{2k}^*$ on $\mathbb{Z}_{2k}$ are contained in $D$, the equations
\[2+1=3\notin{D}, ...,2+(2k-5)=2k-3\notin{D}\text{, and } 4+(2k-3)=1\notin{D},\]
imply (Lemma~\ref{userigid}) that all symmetries $e^ab$ with $a\in \{1,3,..., 2k-5,2k-3\}$ are not in $\theta(K)$. Hence, $\theta(K)$ is trivial. \fin
\end{example}

Now, a noncommutative example.

\begin{example}[\textbf{A noncommutative strong dichotomy}]\label{noncomex}
Let $R$ be the noncommutative ring of all upper triangular matrices $2\times 2$ with entries in $\mathbb{Z}_2$. The symmetry $e^I$, where $I$ is the identity matrix, is a quasipolarity, so we can construct self-complementary dichotomies with the procedure of Section \ref{sec3.1}. A possible choice is the dichotomy with $K=\{\mathbf{0},A_1,A_2,A_3\}$, where 
\[A_1=
\begin{bmatrix}
 1 & 0 \\
      0  & 0
\end{bmatrix},\ 
A_2=\begin{bmatrix}
 1 & 1 \\
      0  & 0
\end{bmatrix},\text{ and } 
A_3=
\begin{bmatrix}
 1 & 1 \\
      0  & 1
\end{bmatrix}.\]
In this case, $R^*=\{I,A_3\}$ and $A_1$ and $A_2$ are invariant under the left action of $R^*$. Let us show that $K$ is rigid. As usual, $e^AB\notin \theta(K)$ for each symmetry $e^AB$ with $A\in D$. The cases when $A\in K$ remain. If $A=0$, then since $A_3 A_3=I\notin K$, the symmetry $e^{\mathbf{0}}A_3$ is not in $\theta(K)$. Also, 
\[A_2+A_1=A_1+A_2\notin{K}\text{, and } A_1+A_3\notin{K},\]
so, by Lemma~\ref{userigid}, all symmetries $e^AB$ with $A\in \{A_1,A_2,A_3\}$ are not in $\theta(K)$. Thus, $\theta(K)$ is trivial, and $(K,R\setminus K,e^I)$ is a strong dichotomy. The reader can obtain other examples of strong dichotomies from quasipolarities of $R$, whenever the chosen $K$ is not invariant under the left action of $A_3$, by using a similar argument.
\fin
\end{example}

Finally, note that there is no strong dichotomy of $\mathbb{Z}_4$ to define a counterpoint world with a four-tone scale (which can be thought of as a diminished seventh chord). In fact, the possible dichotomies $\{\{0,1\},\{2,3\}\}$, $\{\{0,2\},\{1,3\}\}$, and $\{\{0,3\},\{1,2\}\}$ are invariant under $e^1(-1)$, $e^2$, and $e^{-1}(-1)$, respectively, so they are not rigid or strong (Corollary~\ref{rig}). 

\subsection{The contrapuntal intervals ring}\label{sec3}

Given an arbitrary ring $R$, we can construct the polynomial ring $R[x]$ and the \textit{two-sided} ideal $\left\langle p(x) \right\rangle$ consisting of all (left or right) multiples of $p(x)$, where $p(x)$ is the polynomial $x^2-\alpha x$ for some fixed $\alpha=0,1$. Thus, we have the quotient ring\footnote{See \cite[p. 3]{Lam} for details.} $R[x]/\left\langle p(x)\right\rangle$, which we call \textit{the contrapuntal intervals ring} associated with $R$. If we denote by $\x$ the class of $x$, then each element of this ring can uniquely be written\footnote{In this representation we use the fact that $R$ can be regarded as a subring of $R[x]/\left\langle p(x)\right\rangle$. Also, the representation follows from the fact that each polynomial $f(x)$ in $R[x]$ can uniquely be written (division algorithm) as $q(x)p(x)+r(x)$, where either $r(x)=0$ or $deg(r(x))<deg(p(x))=2$.} as $c+d\x$, and hence it makes sense to denote the ring by $R[\x]$. Thus, $R[\x]$ consists of all linear polynomials $c+d\x$ subject to the relation $\x^2=\alpha \x$. The ring $R[\x]$ is noncommutative if $R$ is. 

Explicitly, the operations in $R[\x]$ are defined by
\[(c+d\x)+(c'+d'\x)=c+c'+(d+d')\x
\] 
and
\[(c+d\x)(c'+d'\x)=cc'+(cd'+dc'+dd'\alpha)\x.
\] 

In the case when $\alpha=0$, $R[\x]$ is the \textit{dual numbers} ring $R[\epsilon]$. In the case when $\alpha=1$, $R[\x]$ is isomorphic to the \textit{product} ring $R\times R$ by means of the assignment 
\[R\times R\longrightarrow R[\x]:(r,s)\mapsto (r,s-r).\] 
Both variations share the same structure as Abelian groups, but their differ in their products. In the dual numbers case, intervallic variations are \textit{infinitesimals}, so $d\epsilon d' \epsilon=dd'\epsilon ^2=0$, whereas in the other case, intervallic variations are \textit{just elements of $R$}, so $d\x d' \x=dd'\x ^2=dd'\x$.

An element $c+d\x \in R[\x]$ is invertible if and only if $c$ and $c+d\alpha$ are invertible in $R$. In fact, if $(c'+d'\x)(c+d\x)=1$ and $(c+d\x)(c'+d'\x)=1$, then $c'c+(c'd+d'c+d'd\alpha)\x=1$ and $cc'+(cd'+dc'+dd'\alpha)\x=1$, so $c$ is invertible and $cd'+dc'+dd'\alpha=0$. From the last equation we deduce the following ones.
\begin{align*}
(c+d\alpha)d'+dc' & =0 \\
(c+d\alpha)d'\alpha+d\alpha c' & =0 \\
(c+d\alpha)d'\alpha+d\alpha c'+cc' & =cc'\\
(c+d\alpha)(c'+d'\alpha)&=1 
\end{align*}
Similarly, $c+d\alpha$ is left-invertible. Reciprocally, if $c$ and $c+d\alpha$ are invertible, define $c'=c^{-1}$ and $d'=-(c+d\alpha)^{-1}dc'$. Thus, $c'+d'\x$ is a two-sided inverse for $c+d\x$.

Alternatively, in the case when $\alpha=1$, by the isomorphism between $R[\x]$ and $R\times R$, the inverse of $c+d\x$ also has the description $c^{-1}+((c+d)^{-1}-c^{-1})\x$.

We have the following proposition regarding dichotomies of $R[\x]$ induced by dichotomies of a ring $R$. Given subsets $X,Y\subseteq R$, we define
\[X+ Y \x =\{c+d\x\in R[\x]\ |\ c\in X\text{ and } d\in Y\}.\]
Also, we denote by $X[\x]$ the set $R+X\x$.

\begin{proposition}\label{dualexts} Let $\{K,D\}$ be a dichotomy of a ring $R$.
\begin{enumerate}
\item The pair $\{K[\x],D[\x]\}$ is a dichotomy of $R[\x]$.
\item If $(K,D,e^ab)$ is a self-complementary dichotomy of $R$, then $(K[\x],D[\x],e^{a\x}b)$ is a self-complementary dichotomy of $R[\x]$.
\item If $(K,D,e^ab)$ is a quasipolarization of $R$, then $(K[\x],D[\x],e^{a\x}b)$ is a quasipolarization of $R[\x]$. 
\end{enumerate}
As previously commented, strong dichotomies need not induce strong dichotomies on contrapuntal intervals.  
\end{proposition}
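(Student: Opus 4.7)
The plan is to verify the three clauses in order, using only the explicit action of $e^{a\epsilon}b$ on a dual number together with the composition rule \eqref{Eq:CompositionOfSymmetries}. For item~1, I would invoke the unique representation of each element of $R[\epsilon]$ as $c+d\epsilon$ with $c,d\in R$: partitioning by whether the interval coordinate $d$ lies in $K$ or in $D$ recovers exactly $\{K[\epsilon],D[\epsilon]\}$. The cardinality identity then reads $|K[\epsilon]| = |R|\cdot|K| = |R|\cdot|D| = |D[\epsilon]|$, giving the dichotomy property.

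For item~2, the first step is to confirm that $e^{a\epsilon}b$ is genuinely a symmetry of $R[\epsilon]$: the constant $b\in R^*$ remains invertible when viewed in $R[\epsilon]$, by the invertibility criterion for dual numbers established immediately before the proposition, and $a\epsilon\in R[\epsilon]$. A direct computation then gives
\begin{equation*}
e^{a\epsilon}b\,(c+d\epsilon) = b(c+d\epsilon) + a\epsilon = bc + (bd+a)\epsilon,
\end{equation*}
so the interval coordinate of the image is exactly $e^ab(d)$. Consequently
\begin{equation*}
e^{a\epsilon}b\,(K[\epsilon]) = R + e^ab(K)\,\epsilon = R + D\,\epsilon = D[\epsilon],
\end{equation*}
which is what is required.

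For item~3, applying the composition rule \eqref{Eq:CompositionOfSymmetries} yields
\begin{equation*}
(e^{a\epsilon}b)\circ(e^{a\epsilon}b) = e^{b(a\epsilon)+a\epsilon}\,b^2 = e^{(ba+a)\epsilon}\,b^2,
\end{equation*}
and the hypothesis that $(e^ab)^2 = e^0 1$ in $\sym(R)$ unpacks to $ba+a=0$ and $b^2=1$. Substituting these gives the identity on $R[\epsilon]$, so the self-complementary dichotomy produced in part~2 is again a quasipolarization.

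The argument contains no real obstacle; the only delicate point is to ensure that $b$ continues to lie in $R[\epsilon]^*$ so that $e^{a\epsilon}b$ is a bona fide symmetry, and this is exactly what the invertibility discussion for dual numbers supplies just above the statement. Note that commutativity of $R$ is never invoked, which is consistent with the paper's general stance.
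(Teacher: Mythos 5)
Your proposal is correct and follows essentially the same route as the paper: item 2 by computing the action of $e^{a\epsilon}b$ on a generic element and using $bR=R$ (you establish this via $b\in R^*$), and item 3 via the composition rule and the unpacking of involutivity into $ba+a=0$, $b^2=1$. The only difference is that you write out the cardinality argument for item 1, which the paper leaves as an exercise.
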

\begin{proof}
1. Exercise. 
2. This follows from 1 and the equalities
\begin{align*}
e^{a\x}b(K[\x])&=\{e^{a\x}b(r+k\x)\ |\ r\in R\text{ and }k\in K\}\\
 &=\{br+(bk+a)\x\ |\ r\in R\text{ and }k\in K\}\\
 &=bR+(e^ab(K))\x\\
 &=R+D\x\\
 &=D[\x].
\end{align*}
As to the fourth equality above, note that $bR=R$ since $b$ is invertible.
3. This follows from 2 and the equation
\[e^{a\x}b\circ e^{a\x}b=e^{(ba+a)\x}b^2=e^0 1,\]
which is a consequence of the identity $e^a b\circ e^a b=e^0 1$, equivalent to $ba+a=0$ and $b^2=1$. 
\end{proof}

\section{A mathematical model of first-species counterpoint}\label{model}

We start with a \textit{strong dichotomy} $(K,D,p)$ of a ring $R$, where $p=e^ab$, and construct the contrapuntal intervals ring $R[\x]$, which models the two voices of first-species counterpoint. Then, we consider \textbf{progressions} $(\xi,\eta)$ of contrapuntal consonances in $K[\x]$ and aim to determine when they are valid for counterpoint. The three principles in Sections~\ref{alternation}, \ref{locchar}, and \ref{variety} serve this purpose.

\subsection{Alternation}\label{alternation}

This principle intends to formalize the idea of \textit{tension/resolution} in counterpoint. It requires the existence of a symmetry $g\in \sym(R[\x])$ such that $\xi\in g(D[\epsilon])$ and $\eta\in g(K[\epsilon])$. In such a case, we say that $(\xi,\eta)$ is \textbf{polarized}. Thus, $(\xi,\eta)$ is the deformation, by means of $g$, of a progression from a dissonance to a consonance;  see Figure~\ref{counterpoint}. Under a musical intuition, $\xi$ moves to $\eta$ since dissonances resolve to consonances. However, how do we ensure that deformed consonances and dissonances are consonances and dissonances on their own right? In the following section we solve this problem. In Section~\ref{pols} we give a simple characterization of polarized progressions. 
\begin{figure}
\centering
\def\svgscale{0.8}
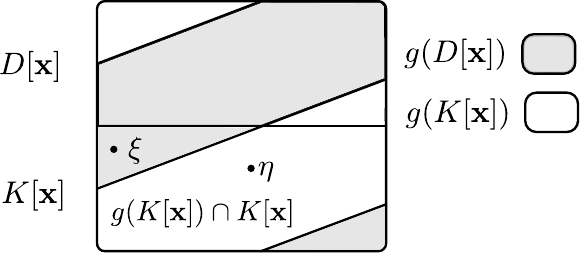
\caption{Here, $g$ is a deformation symmetry and $\eta$ an admitted successor of $\xi$.}
 \label{counterpoint}
\end{figure}

\subsection{Local characterization of consonances and dissonances}\label{locchar}

As observed in Section~\ref{sec2}, the strength property of the Renaissance dichotomy characterizes it. In general, given a strong dichotomy $(K,D,p)$ as above, the induced dichotomy of the contrapuntal intervals is not strong, but it has the following local strength property in each \textbf{fiber} $I_z$ of $R[\x]$, where $I_z=z+R\x$ and $z\in R$, as shown in Theorem~\ref{indxdich}: 
\begin{itemize}
\item There is a unique symmetry $p^z[\x]\in \sym(R[\x])$ of the form $e^{u+v\x}c$, defined by $p^z[\x]=e^{(1-b)z+a\x}b$, such that $p^z[\x](z+K\x)=z+D\x$.
\end{itemize}
 
In the case of the Renaissance strong dichotomy $(K,D,e^25)$ of $\mathbb{Z}_{12}$, this condition locally determines the induced dichotomy $\{K[\x],D[\x]\}$. This means that the following two conditions \textit{characterize} $\{z+K\x,z+D\x\}$ among partitions $\{z+K'\x,z+D'\x\}$ of $I_z$:  
\begin{enumerate}
\item[(i)] The symmetry $p^z[\x]$ is the unique $p'\in \sym(R[\x])$ of the form $e^{u+v\x}c$ such that $p'(z+K'\x)=z+D'\x$.
\item[(ii)] The set $7K'$ is a multiplicative monoid.
\end{enumerate} 
In fact, (i) is equivalent to the condition that $p$ is the unique symmetry of $\mathbb{Z}_{12}$ sending $K'$ to $D'$, so according to the characterization of the Renaissance dichotomy in Section~\ref{sec2}, the conditions (i) and (ii) imply $K'=K$ and $D'=D$. Consequently, if we want to ensure that a certain partition $\{z+K'\x,z+D'\x\}$ of $I_z$ offers a relaxed consonance/dissonance notion, we only require (i); otherwise we back to $\{z+K\x,z+D\x\}$. Thus, for a general strong dichotomy, we locally characterize contrapuntal dissonances and consonances with (i). 

Now, by alternation, $\xi$ (respectively $\eta$) is in a half of the partition 
\begin{equation}\label{defoloc}
\{g(K[\x])\cap I_z,g(D[\x])\cap I_z\},
\end{equation}
where $z$ is the cantus firmus of $\xi$ (respectively $\eta$). Therefore, we require the condition (i) on the partition in Equation~\eqref{defoloc}. In Theorem~\ref{equivcond}, we prove that this particular condition is equivalent to the equation
\begin{equation}\label{two}
p^z[\x](g(K[\x])\cap I_z)=g(D[\x])\cap I_z.
\end{equation}
Mazzola requires it only for the cantus firmus $z$ of $\xi$.\footnote{In Section~\ref{vars}, we discuss other possibilities, like also requiring (i) for the cantus firmus of $\eta$, which need not coincide with $z$. On the other hand, this condition is \textit{weaker} than Mazzola's second requirement \cite[Definition~95]{MazzolaTopos} $p^z[\x](g(K[\x]))=g(D[\x])$, and they are equivalent if $R$ is commutative, as shown in Section~\ref{loccharequiv}.}

\subsection{Variety}\label{variety}

In this model, the variety principle of counterpoint \cite[p.~21]{Fux} corresponds to the condition that there is a maximum of alternations from $\xi$, that is, the cardinality of $g(K[\x])\cap K[\x]$ is maximum among all $g\in \sym(R[\x])$ such that 1. $\xi \in g(D[\x])\cap K[\x]$ (alternation) and 2. Equation~\eqref{two} holds for the cantus firmus $z$ of $\xi$ (local dissonance).

\subsection{Defining first-species counterpoint}\label{sec5.3}

Now we can give the definition of admitted successor.   
\begin{definition}\label{def} Let $(K,D,p)$ be a \textit{strong dichotomy} of a ring $R$. 

\begin{itemize}
\item A \textbf{contrapuntal symmetry} for a consonance $\xi\in K[\x]$, with $\xi=z+k\x$, is a symmetry $g$ of $R[\x]$ such that 
\begin{enumerate}
\item $\xi\in g(D[\x])$,
\item $p^z[\x](g(K[\x])\cap I_z)=g(D[\x])\cap I_z$, and 
\item the cardinality of $g(K[\x])\cap K[\x]$ is maximum among all $g$ satisfying 1 and 2. 
\end{enumerate}
Note that the contrapuntal symmetry for a given consonance is not required to be unique. 
\item An \textbf{admitted successor} of a consonance $\xi\in K[\x]$ is an element $\eta$ of $g(K[\x])\cap K[\x]$ for some contrapuntal symmetry $g$. See Figure~\ref{counterpoint}.
\item Progressions $(\xi,\eta)$ usually occur in a subset $X$ of $R$, which we call \textbf{scale}. If $\eta$ is an admitted successor of $\xi$ we say that $(\xi,\eta)$ is \textbf{allowed}. If it does not happen and $(\xi,\eta)$ is polarized, we say that it is \textbf{forbidden}.
\end{itemize}
\fini
\end{definition}

Since the model chooses suitable pairs $(\xi,\eta)$ as allowed among all polarized pairs, we say that \textit{the model does not decide on non-polarized pairs}. Next we characterize polarized pairs and observe that, in the case of dual numbers ($\alpha=0$), they are all non-constant progressions. In the case $\alpha=1$, they could be more particular.

\subsection{Polarized progressions characterization}\label{pols}

Let $\xi=z+k\x$ and $\eta=z'+k'\x$, where $\xi,\eta\in K[\x]$ and $(K,D,p)$ is a strong dichotomy of a \textit{finite} ring $R$. We want to determine when there is $g\in R[\x]$ such that $z+k\x \in g(D[\x])$ and $z'+k\x\in g(K[\x])$. But the latter property is equivalent to $k\x \in e^{-z}g(D[\x])$ and $z'-z+k\x\in e^{-z}g(K[\x])$, so we reduce the problem to the case when $\xi=k\x$ and $\eta=y+k'\x$. Now, the following properties are equivalent. 
\begin{enumerate}
\item There is $g\in \sym(R[\x])$ such that $k\x \in g(D[\x])$ and $y+k'\x\in g(K[\x])$.
\item There is $g\in \sym(R[\x])$ such that $g(k\x) \in D[\x]$ and $g(y+k'\x)\in K[\x]$.
\item There is $e^{u+v\x}(c+d\x)\in \sym(R[\x])$ such that $(c+d\alpha)k+v \in D$ and $(c+d\alpha)k'+v+dy\in K$.
\item There are $e^{u+v\x}(c+d\x)\in \sym(R[\x])$ and $\kappa\in K$ such that $v=\kappa-(c+d\alpha)k'-dy$ and $\kappa+(c+d\alpha)(k-k')-dy \in D$.
\item There is $c+d\x$ invertible such that $e^{(c+d\alpha)(k-k')-dy}(K)\nsubseteq K$.
\item There is $c+d\x$ invertible such that $e^{(c+d\alpha)(k-k')-dy}(K)\neq K$.
\item There is $c+d\x$ invertible such that $(c+d\alpha)(k-k')-dy\neq 0$.
\end{enumerate}
The equivalence between 5 and 6 is given by the finiteness of $R$. 

Note that in turn 7 implies $k\x\neq y+k'\x$. In the dual numbers case ($\alpha=0$), we can prove that it is also a sufficient condition.

\begin{proposition} Let $\xi,\eta\in K[\e]$, where $(K,D,p)$ is a strong dichotomy of a finite ring $R$. The pair $(\xi,\eta)$ is polarized if and only if $\xi\neq\eta$. 
\end{proposition}
\begin{proof}
By 7 above, it only remains to prove that if $k\e\neq y+k'\e$, then there are $c\in R^*$ and $d\in R$ such that $c(k-k')-dy\neq 0$. We have two cases. \textit{First case:} $k-k'\neq 0$. Take $d=0$ and $c=1$. \textit{Second case:} $y\neq 0$ and $k-k'=0$. Take $d=1$ and any $c$.
\end{proof}

In the case when $\alpha=1$, this result is not true. Let us consider the case $R=\mathbb{Z}_{12}$. If $k-k'\neq 0$, we polarize $(\xi,\eta)$ as in the previous proof. However, if $y\neq 0$ and $k-k'=0$, we have two possibilities. First, if $y\neq 6$, then $2y\neq 0$ for $5+2\x$ invertible. Second, if $y=6$, then $dy=0$ for all $c+d\x$ invertible since $d$ is even always. Thus, the polarized pairs $(\xi,\eta)$ are all those such that $k\neq k'$, or $k=k'$ and $y\notin \{0,6\}$. In other words, the non-polarized progressions are all repetitions and parallelisms with $y=6$ (tritone skip).

\subsection{The structural role of polarities}\label{sec5.1}

Next, we prove that there is a unique symmetry $p^z[\x]\in \sym(R[\x])$ of the form $e^{u+v\x}c$ that interchanges contrapuntal consonances and dissonances in the fiber $I_z$. In particular, it leaves $I_z$ unchanged. In the case when $z=0$, $p^0[\x]$ is the extension $e^{a\x}b$ studied in Proposition~\ref{dualexts}. 

\begin{definition} Let $g$ be a symmetry of $R[\x]$ and $z\in R$. We say that $g$ is \textbf{$z$-invariant}
if it leaves invariant the fiber $I_z$, that is,  if $g(I_z)=I_z$. \fini
\end{definition}

We first compute some candidates to $p^z[\x]$. 

\begin{proposition}\label{charinv}
A symmetry $g$ of $R[\x]$, where $g=e^{u+v\x}(c+d\x)$, is $z$-invariant if and only if $u=(1-c)z$.
\end{proposition}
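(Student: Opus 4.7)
The plan is to verify the biconditional by a direct calculation: expand $g(x+r\epsilon)$ for an arbitrary $r\in R$ using the composition rule for affine homomorphisms and the relation $\epsilon^2=0$, then read off a necessary condition on $u$ from the cantus firmus component, and conversely check that this condition is sufficient.

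Concretely, I would first compute
\[
g(x+r\epsilon)=(c+d\epsilon)(x+r\epsilon)+(u+v\epsilon)=(cx+u)+(cr+dx+v)\epsilon,
\]
where the term $dr\epsilon^2$ vanishes. The cantus firmus part $cx+u$ does not depend on $r$, so the image $g(I_x)$ is contained in the fiber $I_{cx+u}$. Hence $g(I_x)\subseteq I_x$ forces $cx+u=x$, i.e.\ $u=(1-c)x$. This establishes the "only if" direction, since invariance of $I_x$ implies in particular $g(x)\in I_x$.

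For the converse, assuming $u=(1-c)x$, the calculation above gives $g(x+r\epsilon)=x+(cr+dx+v)\epsilon$, so $g(I_x)\subseteq I_x$. To upgrade this inclusion to equality, I would use that $c$ is invertible (because $g$ is a symmetry, $c+d\epsilon$ is a unit in $R[\epsilon]$, forcing $c\in R^*$): for any prescribed $s\in R$, the equation $cr+dx+v=s$ has the solution $r=c^{-1}(s-dx-v)$, so $g$ maps $I_x$ onto $I_x$.

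The argument is essentially a one-line computation, so there is no substantive obstacle; the only subtlety to be careful about is remembering to invoke invertibility of $c$ when going from $g(I_x)\subseteq I_x$ to $g(I_x)=I_x$, which is what lets the discantus component $cr+dx+v$ sweep out all of $R$ as $r$ ranges over $R$.
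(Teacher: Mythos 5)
Your proposal is correct and follows essentially the same route as the paper: compute $g(x+R\epsilon)=cx+u+(cR+dx+v)\epsilon$, use invertibility of $c$ to see the $\epsilon$-part sweeps out all of $R$ (the paper writes this as $cR=R$, hence $g(I_x)=I_{cx+u}$), and conclude that $x$-invariance is equivalent to $cx+u=x$, i.e.\ $u=(1-c)x$. Your explicit surjectivity step via $r=c^{-1}(s-dx-v)$ is just a spelled-out version of the paper's $cR+dx+v=R$.
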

\begin{proof}
Note that 
\begin{align*}
g(z+R\x) &=  e^{u+v\x}(c+d\x)(z+R\x) \\
& =  cz+u+
((c+d\alpha)R+dz+v)\x  = cz+u+
R\x.
\end{align*}
Thus, $g$ is $z$-invariant if and only $cz+u=z$, the latter condition being equivalent to $u=(1-c)z$.
\end{proof}

We denote by $H_z$ the \textit{group}\footnote{In fact, this set is a group since it is an stabilizer one.} of all $z$-invariant symmetries of $R[\x]$. In the case when $z=0$, we simply write $H$ instead of $H_0$. The conjugation automorphism $e^z\circ (-)\circ e^{-z}$ of $\sym(R[\x])$ restricts to an isomorphism $H\longrightarrow H_z$ for each $z$, as established in the following proposition.

\begin{proposition}\label{conj}
For each $z\in R$, the conjugation homomorphism $e^z\circ (-)\circ e^{-z}:H \longrightarrow  H_z$ is an isomorphism of groups.
\end{proposition}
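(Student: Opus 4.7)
The plan is to verify in turn that the map $\Phi\colon g\mapsto e^x g e^{-x}$ is well-defined as a function $H\to H_x$, is a group homomorphism, and is bijective with explicit inverse $h\mapsto e^{-x} h e^x$. The pivotal observation, which makes everything else automatic, is that the translation $e^x$, regarded as the symmetry $e^x 1$ of $R[\epsilon]$, sends the fiber $I_0 = R\epsilon$ bijectively onto $I_x = x+R\epsilon$, with two-sided inverse $e^{-x}$. Indeed, $e^x(r\epsilon)=x+r\epsilon$ exhausts $I_x$ as $r$ ranges over $R$. (Alternatively, this is the case $c=1$, $u=0$ of Proposition~\ref{charinv} applied to $I_0$, together with its translate.)

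With that in hand, well-definedness of $\Phi$ is a direct computation: if $g\in H$, then
\[
(e^x g e^{-x})(I_x)=e^x\bigl(g(e^{-x}(I_x))\bigr)=e^x(g(I_0))=e^x(I_0)=I_x,
\]
so $e^x g e^{-x}\in H_x$. The homomorphism property follows from the general fact that conjugation by a fixed element of the group $\sym(R[\epsilon])$ is an inner automorphism, hence in particular a group homomorphism when restricted to the subgroup $H$.

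For bijectivity, I would appeal to symmetry: the very same argument, with $x$ replaced by $-x$, shows that $\Psi\colon h\mapsto e^{-x} h e^x$ is a well-defined map $H_x\to H_0=H$. The composites $\Psi\circ\Phi$ and $\Phi\circ\Psi$ are both the identity, since $e^{-x}(e^x g e^{-x})e^x=g$ and $e^x(e^{-x} h e^x)e^{-x}=h$ in $\sym(R[\epsilon])$. Hence $\Phi$ is a group isomorphism. There is no real obstacle in this argument; the only point that needs care is tracking the fiber-invariance condition through the conjugation, which is handled entirely by the change-of-fiber identity $e^{\pm x}(I_0)=I_{\pm x}$ noted above.
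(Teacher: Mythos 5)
Your proposal is correct and follows essentially the same route as the paper: check that conjugation by $e^x$ carries $0$-invariant symmetries to $x$-invariant ones via the fiber identity $e^{\pm x}(I_0)=I_{\pm x}$, and exhibit $e^{-x}\circ(-)\circ e^x$ as the inverse. Your version merely spells out the homomorphism property and the two composites, which the paper leaves implicit.
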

\begin{proof}
First, let us prove that the conjugation homomorphism has its images in $H_z$. If $h\in H$, then
\[
e^{z}\circ h\circ e^{-z} (z+R\x) = e^z\circ h(R\x)
= e^{z}(R\x)=z+R\x.
\]
Moreover, the inverse of $e^z\circ (-)\circ e^{-z}$ is $e^{-z}\circ (-)\circ e^z$.
\end{proof}

The \textit{uniqueness} in the following theorem offers an explanation of the structural role of the strong dichotomy $(K,D,p)$ with $p=e^ab$.

\begin{theorem}\label{indxdich}
Let $(K,D,e^ab)$ be a strong dichotomy of $R$. For each $z\in R$, there is a unique $z$-invariant symmetry $p^z[\x]$ of the form $e^{u+v\x}c$, defined by $p^z[\x]=e^z\circ e^{a\x}b\circ e^{-z}=e^{(1-b)z+ a\x}b$, such that 
\begin{equation*}
p^z[\x](z+K\x)=z+D\x.
\end{equation*}
\end{theorem}
\begin{proof}
Since symmetries $p'\in H_z$ of the form $e^{u+v\x}c$ such that $p'(z+K\x)=z+D\x$ are in correspondence (conjugation) with 
symmetries in $H$, of the same form,  sending $K\x$ to $D\x$, the proof of the theorem reduces to the case when $z=0$ (Proposition~\ref{unique0}). In that case, $p^0[x]$ is $e^{a\x}b$, so $p^z[\x]=e^z\circ e^{a\x}b\circ e^{-z}=e^{(1-b)z+ a\x}b$.
\end{proof}

\begin{proposition}\label{unique0}
Let $(K,D,e^ab)$ be a strong dichotomy of $R$. The symmetry $e^{a\x}b$ is the unique $0$-invariant symmetry of the form  $e^{u+v\x}c$ sending $K\x$ to $D\x$. 
\end{proposition}
\begin{proof}
First, by Proposition~\ref{charinv}, necessarily $u=0$. Now, $e^{v\x}c$ sends $K\x$ to $D\x$ if and only $e^vc(K)=D$. Hence, $e^vc=e^ab$ and $e^{u+v\x}c=e^{a\x}b$.
\end{proof}

\subsection{The local condition on the deformed dichotomy}\label{locchardefo}

\begin{lemma}\label{hachita}
Let $g$ be a symmetry of $R[\x]$. For each $z\in R$, there is $\gamma_z\in \sym(R)$ such that 
\begin{equation*}
g(X[\x])\cap I_z=z+\gamma_z(X)\x
\end{equation*}
for any subset $X$ of $R$. Concretely, if $g=e^{u+v\x}(c+d\x)$, we define \[\gamma_z=e^{v+dc^{-1}(z-u)}(c+d\alpha).\]
\end{lemma}
\begin{proof}
If $g=e^{u+v\x}(c+d\x)$, then
\[g(X[\x])\cap I_z=z+((c+d\alpha)X+v+dc^{-1}(z-u))\x=z+\gamma_z(X)\x.\]
\end{proof}

\begin{theorem}\label{equivcond}
Let $(K,D,p)$ be a strong dichotomy of $R$ and $g\in \sym(R[x])$. The following conditions are equivalent.
\begin{enumerate}
\item The symmetry $p^z[\x]$ is the unique $p'\in H_z$ of the form $e^{u+v\x}c$ such that 
\begin{equation}\label{locintdefo}
p'(g(K[\x])\cap I_z)=g(D[\x])\cap I_z.
\end{equation} 
\item The equation $p^z[\x](g(K[\x])\cap I_z)=g(D[\x])\cap I_z$ holds.
\item The equation $p\gamma_z=\gamma_zp$ holds, where $\gamma_z$ is as in Lemma~\ref{hachita}.
\end{enumerate}
\end{theorem}
\begin{proof}
Equation\eqref{locintdefo}, for $p'=e^{u+v\x}c\in H_z$, is equivalent to the equations $(e^{v}c)\gamma_z(K)=\gamma_z(D)$, $\gamma_z^{-1}(e^{v}c)\gamma_z(K)=D$, and $(e^{v}c)\gamma_z=\gamma_zp$, by Lemma \ref{hachita} and the fact that $p$ is a polarity. The last equation (and hence Equation~\eqref{locintdefo}) has a unique solution for $e^vc$ (respectively $p'$) and Equation~\eqref{locintdefo}, for $p'=p^z[\x]$, is equivalent to $p\gamma_z=\gamma_zp$, so 1 is equivalent to 3 and 2.
\end{proof}

\section{Simplified computation of admitted successors}\label{secsimp}

In this section we characterize the admitted successors of a contrapuntal consonance $z+k\x$ as the translations, by $z$, of the admitted successors of $k\x$ associated with contrapuntal symmetries in $H$. This offers and important simplification of the original computation.

First, we transfer the symmetries satisfying conditions 1, 2, and 3, and admitted successors, between the consonances $z+k\x$ and $k\x$. The basic tool is the \textbf{translation permutation} $e^{z}\circ (-):\sym(R[\x])\longrightarrow \sym(R[\x])$, whose inverse is $e^{-z}\circ (-)$.

\subsection{Transfer of the first condition}\label{i}
Let $z+k\x$ be a contrapuntal consonance. We define $S^{1}_{z+k\x}$ as the set of all $g\in \sym(R[\x])$ satisfying condition 1 in Definition~\ref{def}, namely $z+k\x \in g(D[\x])$. Note that the translation $e^{z}\circ (-)$ restricts to a bijection $S^1_{k\x}\longrightarrow S^1_{z+k\x}$. This means that $k\x\in g(D[\x])$ if and only if $z+k\x\in e^zg(D[\x])$. 

\subsection{Transfer of the second condition}\label{sec6.2}

We define $S^{2}_{z}$ as the set of all $g\in \sym(R[\x])$ satisfying condition 2 in Definition~\ref{def}, namely $p^z[\x](g(K[\x])\cap I_z)=g(K[\x])\cap I_z$. The translation $e^{z}\circ (-)$ restricts to a bijection $S^2_0\longrightarrow S^2_z$, as shown by the following equivalences. Recall that $p^0[\x]=e^{-z}\circ p^z[\x]\circ e^{z}$ by Theorem~\ref{indxdich}.   
\begin{align*}
p^0[\x](g(K[\x])\cap I_0)=g(D[\x])\cap I_0 & \Leftrightarrow\\
e^{-z}p^z[\x]e^z (g(K[\x])\cap I_0)=g(D[\x])\cap I_0 & \Leftrightarrow\\
 p^z[\x]e^z (g(K[\x])\cap I_0)=e^z(g(D[\x])\cap I_0) & \Leftrightarrow\\
 p^z[\x](e^z g(K[\x])\cap I_z)=e^zg(D[\x])\cap I_z
\end{align*}
In fact, $e^z$, as a bijection, commutes with intersections and $e^z(I_0)=I_z$. 
\subsection{Transfer of the third condition and admitted successors}\label{sec6.3}

Let $z+k\x$ be a contrapuntal consonance and $S_{z+k\x}$ the set of all $g\in \sym(R[\x])$ satisfying 1 and 2 in Definition~\ref{def}. Since the translation $e^{z}\circ (-)$ restricts to bijections between sets of symmetries satisfying 1 and 2, respectively (Sections \ref{i} and \ref{sec6.2}), then it restricts to a bijection $S_{k\x}\longrightarrow S_{z+k\x}$.

On the other hand, note that 
\begin{align*}
e^{z}(g(K[\x])\cap K[\x])     & =e^z g(K[\x])\cap e^z(K[\x])\\
                                           & =e^zg(K[\x])\cap K[\x].
\end{align*}
The second equation holds because $X[\x]$ (for any $X$) is invariant under transformations of the form $e^{y}$. In particular, this means that $|g(K[\x])\cap K[\x]|=|e^zg(K[\x])\cap K[\x]|$ and hence, in that sense, the bijection $S_{k\x}\longrightarrow S_{z+k\x}$ preserves the cardinality of the sets of the form $g(K[\x])\cap K[\x]$. 

Thus, $|g(K[\x])\cap K[\x]|$ is maximum, among all $g$ satisfying 1 and 2 in Definition~\ref{def}, for the consonance $k\x$, if and only $|e^zg(K[\x])\cap K[\x]|$ is for $z+k\x$. This means that $S_{k\x}\longrightarrow S_{z+k\x}$ restricts to a bijective correspondence between contrapuntal symmetries. 

In particular, $g(K[\x])\cap K[\x]$ is a set of admitted successors of $k\x$ if and only if $e^zg(K[\x])\cap K[\x]$ is for $z+k\x$. This implies the following lemma.

\begin{lemma}\label{transsuc}
The sets of admitted successors of $z+k\x$ can be computed as those of the form 
\[e^z(g(K[\x])\cap K[\x]),\] 
where $g$ is a contrapuntal symmetry for $k\x$. 
\end{lemma}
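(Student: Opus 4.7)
The plan is to assemble the three bijective correspondences already laid out in Sections~\ref{i}--\ref{sec6.3} into a single statement about admitted successors, so almost no new work is needed. The key observation is that the three subsections together show that the translation permutation $e^{x}\circ(-)$ restricts to a bijection between the contrapuntal symmetries for $k\epsilon$ and those for $x+k\epsilon$: Section~\ref{i} handles condition 1, Section~\ref{sec6.2} handles condition 2, and Section~\ref{sec6.3} shows that this bijection preserves the cardinality $|g(K[\epsilon])\cap K[\epsilon]|$, so it also respects condition 3, which is the maximality condition. Thus $g$ is a contrapuntal symmetry for $k\epsilon$ if and only if $e^{x}g$ is a contrapuntal symmetry for $x+k\epsilon$.

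Once this correspondence is in hand, I would unwind the definition of admitted successor. For a contrapuntal symmetry $e^{x}g$ for $x+k\epsilon$, the associated set of admitted successors is
\[(e^{x}g)(K[\epsilon])\cap K[\epsilon]=e^{x}g(K[\epsilon])\cap K[\epsilon]=e^{x}\bigl(g(K[\epsilon])\cap K[\epsilon]\bigr),\]
where the last equality uses, exactly as in Section~\ref{sec6.3}, that $e^{x}$ commutes with intersections and that $K[\epsilon]$ is invariant under translations of the form $e^{y}$. As $g$ ranges over all contrapuntal symmetries for $k\epsilon$, $e^{x}g$ ranges over all contrapuntal symmetries for $x+k\epsilon$, so every set of admitted successors of $x+k\epsilon$ arises in the claimed form $e^{x}(g(K[\epsilon])\cap K[\epsilon])$, and conversely every such set is a set of admitted successors.

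There is no real obstacle here: the lemma is essentially a bookkeeping consequence of the preceding three subsections, and the proof amounts to chaining the bijection $g\mapsto e^{x}g$ between the relevant sets of symmetries together with the set-theoretic identity above. The only point worth stating explicitly is that the three conditions transfer \emph{simultaneously}, so that contrapuntal symmetries really are in bijective correspondence (not just symmetries satisfying 1 and 2), which is exactly what Section~\ref{sec6.3} already established via the invariance of $|g(K[\epsilon])\cap K[\epsilon]|$ under the translation map.
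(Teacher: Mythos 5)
Your proposal is correct and follows essentially the same route as the paper: the paper also derives the lemma by chaining the translation bijections of Sections~\ref{i} and \ref{sec6.2} with the identity $e^{x}(g(K[\epsilon])\cap K[\epsilon])=e^{x}g(K[\epsilon])\cap K[\epsilon]$, so that cardinalities and hence the maximality condition transfer, giving the bijective correspondence of contrapuntal symmetries and of their successor sets. No gaps.
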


\subsection{Restricting symmetries}

Given a symmetry $g$, we next prove that $g(K[\x])=h(K[\x])$, where $h\in H$. Note that this immediately implies that $h$ is contrapuntal for $z+k\x$ whenever $g$ is. This suggests to draw our attention to the contrapuntal symmetries in $H$.

Recall that $H_z$ consists of all $0$-invariant symmetries of $R[\x]$, that is, the symmetries of the form $e^{v\x}(c+d\x)$ with $c,c+d\alpha\in R^*$ (Proposition~\ref{charinv}).

\begin{proposition}\label{hache}
Let $g$ be a symmetry of $R[\x]$. For each $z\in R$, there is $g_z\in H_z$ such that 
\begin{equation*}
g(X[\x])=g_z(X[\x])
\end{equation*}
for any subset $X$ of $R$. Concretely, if $g=e^{u+v\x}(c+d\x)$, then $g_z$ is $ge^w$, where $w=c^{-1}(z-u)-z$.
\end{proposition}
\begin{proof}
Note that the symmetry $g_z$ defined above is in $H_z$, by Proposition~\ref{charinv}, since $cw+u=(1-c)z$. Thus, 
\[g_z(X[\x])=ge^w(X[\x])=g(X[\x]).\]
\end{proof}

\subsection{Main theorem}
Our previous observations lead to a considerably simpler characterization of contrapuntal symmetries and admitted successors, only involving symmetries in $H$.

\begin{proposition}\label{minisimp} The admitted successors of a consonance $z+k\x\in K[\x]$ can be computed as the elements of the sets of the form
\begin{equation}
e^z(h(K[\x])\cap K[\x]),
\end{equation}
where $h\in H$ and 
\begin{itemize}
\item[(a)] $k\x\in h(D[\x])$, 
\item[(b)] $p^0[\x](h(K[x])\cap I_0)=h(D[x])\cap I_0$, and 
\item[(c)] the cardinality of $h(K[\x])\cap K[\x]$ is maximum among all $h\in H$ satisfying (a) and (b). 
\end{itemize}
\end{proposition}
\begin{proof}
By Lemma~\ref{transsuc}, it is enough to show that the collection of all sets of the form $g(K[\x])\cap K[\x])$, with $g$ contrapuntal symmetry for $k\x$, is equal to the set of all intersections $h(K[\x])\cap K[\x])$ with $h$ satisfying (a), (b), and (c) above.

If $g$ is a contrapuntal symmetry for $k\x$, then, since $g(K[\x])=g_0(K[\x])$ (Proposition~\ref{hache}), $g_0$ is a contrapuntal symmetry and, in particular, satisfies (a) and (b). Also, $g_0$ satisfies (c) because $H\subseteq \sym(R[\x])$. Moreover, $g(K[\x])\cap K[\x]=g_0(K[\x])\cap K[\x]$. 

Conversely, if $h$ satisfies (a), (b), and (c), we claim that $h$ is a contrapuntal symmetry and hence $h(K[\x])\cap K[\x]$ is an usual admitted successors set. To prove the claim, take $g$ satisfying 1 and 2 in Definition~\ref{def} for the consonance $k\x$. The symmetry $g$ satisfies (a), (b), and $g(K[\x])=g_0(K[\x])$, so 
\[|g(K[\x])\cap K[\x]|=|g_0(K[\x])\cap K[\x]|\leq |h(K[\x])\cap K[\x]|,\]
and $h$ is contrapuntal.   
\end{proof}

If we replace condition (b) in Proposition~\ref{minisimp} by its equivalent 3 in Lemma~\ref{equivcond}, we immediately obtain the main theorem.

\begin{theorem}\label{redu} The admitted successors of a consonance $z+k\x\in K[\x]$ can be computed as the elements of the sets of the form
\begin{equation}\label{transpose}
e^z(h(K[\x])\cap K[\x]),
\end{equation}
where $h=e^{v\x}(c+d\x)\in H$ and 
\begin{enumerate}
\item $k\x\in h(D[\x])$, 
\item $p\circ e^v(c+d\alpha)=e^v(c+d\alpha)\circ p$, and 
\item the cardinality of $h(K[\x])\cap K[\x]$ is maximum among all $h\in H$ satisfying 1 and 2. 
\end{enumerate}
\end{theorem}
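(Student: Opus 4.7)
The proposal is to obtain Theorem~\ref{redu} as an immediate consequence of the two results that immediately precede it in the paper, namely Proposition~\ref{minisimp} and Lemma~\ref{commusimp}. In fact, Proposition~\ref{minisimp} already asserts that the admitted successors of $x+k\epsilon$ are exactly the elements of sets of the form $e^x(h(K[\epsilon])\cap K[\epsilon])$ with $h\in H$ satisfying conditions (a), (b) and (c). Conditions (a) and (c) in that proposition coincide verbatim with conditions 1 and 3 of the theorem, while condition (b) is the assertion that $(h(K[\epsilon]),h(D[\epsilon]),p^0[\epsilon])$ is a self-complementary dichotomy.

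The plan, therefore, is just to substitute condition (b) by its equivalent reformulation. By Lemma~\ref{commusimp}, for $h\in H$ the triple $(h(K[\epsilon]),h(D[\epsilon]),p^0[\epsilon])$ is a self-complementary dichotomy if and only if $p^0[\epsilon]\circ h=h\circ p^0[\epsilon]$, which is exactly condition 2 of the theorem. Performing this substitution in Proposition~\ref{minisimp} immediately yields the statement of the theorem, including the form of the successor sets in Equation~\eqref{transpose}.

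There is really no technical obstacle here, since all the substantive work has been done previously: the reduction of arbitrary contrapuntal symmetries $g\in \sym(R[\epsilon])$ to fiber-invariant ones $h\in H$ was carried out in Proposition~\ref{hache} and Proposition~\ref{minisimp} using the explicit replacement $h_g=e^{(v-dc^{-1}u)\epsilon}(c+d\epsilon)$, and the key rigidity statement $h(K[\epsilon])=h'(K[\epsilon])\Rightarrow h=h'$ for $h,h'\in H$ was established in Lemma~\ref{rigid} so that the equivalence in Lemma~\ref{commusimp} goes through. The only task left for the proof is to cite these two results, indicate that conditions (a)--(c) of Proposition~\ref{minisimp} translate into conditions 1--3 of the theorem under Lemma~\ref{commusimp}, and conclude. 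If anything is mildly delicate, it is making sure the hypotheses of Lemma~\ref{commusimp} (in particular that $(K,D,p)$ is a strong dichotomy, invoked through $p^0[\epsilon](K[\epsilon])=D[\epsilon]$) are explicitly part of the theorem's hypotheses, which they are.
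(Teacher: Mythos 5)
Your proposal is correct and coincides with the paper's own argument: the paper obtains Theorem~\ref{redu} precisely by substituting condition (b) of Proposition~\ref{minisimp} with its equivalent commutativity condition from Lemma~\ref{commusimp}. Nothing further is needed.
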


This theorem says that, to compute the admitted successors of a consonance $z+k\x$, it is enough to do so for $k\x$, and then apply the transposition $e^z$ (Equation~\eqref{transpose}). In this simplification we only use symmetries in $H$ and reduce the condition 2 to a suitable commutativity.

\section{Counting formulas and maximization}\label{seccountfor}

Let $R$ be a \textit{finite} ring, $(K,D,p)$ a strong dichotomy of $R$ with $p=e^a b$, and $\{K[\x],D[\x]\}$ the induced dichotomy of $R[\x]$; see Proposition~\ref{dualexts}.

According to Theorem~\ref{redu}, it is desirable to count the number of elements of the sets of the form $h(K[\x])\cap K[\x]$, for $h\in H$, so as to choose suitable maximum cardinals. 

If $e^{v\x}(c+d\x)$, with $c,c+d\alpha \in R^*$, belongs to $H$ and $r+k\x \in K[\x]$, then 
\begin{equation*}
e^{v\x}(c+d\x)(r+k\x)=cr+((c+d\alpha)k+v+dr)\x.
\end{equation*}
Thus,
\begin{equation*}
e^{v\x}(c+d\x)(K[\x])=\bigsqcup\limits_{r\in R}cr+((c+d\alpha)K+v+dr)\x
\end{equation*}
and
\begin{equation}\label{presuc}
e^{v\x}(c+d\x)(K[\x])\cap K[\x]=\bigsqcup\limits_{r\in R}cr+(((c+d\alpha)K+v+dr)\cap K)\x
\end{equation}
because $e^0 c$ is a permutation of $R$, and hence 
\begin{equation}\label{counting}
|e^{v\x}(c+d\x)(K[\x])\cap K[\x]|=\sum\limits_{r\in R}|((c+d\alpha)K+v+dr)\cap K|.
\end{equation}
Moreover, we can assume that $\alpha=0$ in the last equation in view of the following ones.

\begin{align*}
\sum\limits_{r\in R}|((c+d\alpha)K+v+dr)\cap K| & = \sum\limits_{r\in R}\sum\limits_{k\in K}\chi_K((c+d\alpha)k+v+dr)\\
& = \sum\limits_{r\in R}\sum\limits_{k\in K}\chi_K(ck+v+d(\alpha k +r))\\
& = \sum\limits_{k\in K}\sum\limits_{r\in R}\chi_K(ck+v+d(\alpha k +r))\\
& = \sum\limits_{k\in K}\sum\limits_{r\in R}\chi_K(ck+v+dr)\\
& = \sum\limits_{r\in R}|(cK+v+dr)\cap K|
\end{align*}

\begin{lemma}\label{gencount} Let $K$ and $K'$ be subsets of a finite ring $R$ and $d\in R$. The equation
\begin{equation}\label{forgen}
\sum\limits_{r\in R}|(K'+dr)\cap K|=\rho\sum\limits_{\gamma\in \Coker(d\cdot-)}|K'_{\gamma}||K_{\gamma}|
\end{equation}
holds, where
$\Coker(d\cdot-)=R/ \Ima(d\cdot-)$,
\[X_{\gamma}=\{x\in X\ |\ [x]=\gamma\}\]
for each $X\subseteq R$, and $\rho=|\Ker(d\cdot-)|$.
\end{lemma}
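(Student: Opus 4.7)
The plan is to mimic the characteristic-function argument of Lemma~\ref{basiccount}, but to be careful about the fact that $dr$ need no longer traverse all of $R$ as $r$ does; it traverses $\Ima(d\cdot\underline{\ \ })$, and each value in the image is attained exactly $\rho$ times. Partitioning $K$ and $K'$ by their cosets modulo $\Ima(d\cdot\underline{\ \ })$ will then produce exactly the sum on the right-hand side.

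Concretely, I would first write
\[
\sum_{r\in R}|(K'+dr)\cap K|=\sum_{r\in R}\sum_{s\in K'}\chi_K(s+dr),
\]
interchange the order of summation, and use that the fibres of the additive group homomorphism $d\cdot\underline{\ \ }\colon R\to R$ all have cardinality $\rho=|\Ker(d\cdot\underline{\ \ })|$ to rewrite the inner sum as $\rho\sum_{t\in \Ima(d\cdot\underline{\ \ })}\chi_K(s+t)$. Thus
\[
\sum_{r\in R}|(K'+dr)\cap K|=\rho\sum_{s\in K'}\bigl|\{t\in \Ima(d\cdot\underline{\ \ })\mid s+t\in K\}\bigr|.
\]

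Next I would identify the inner cardinality: substituting $k=s+t$, it counts those $k\in K$ with $k-s\in \Ima(d\cdot\underline{\ \ })$, i.e., those $k\in K$ whose class in $\Coker(d\cdot\underline{\ \ })$ equals $[s]$. Hence the inner cardinality is exactly $|K_{[s]}|$, and
\[
\sum_{r\in R}|(K'+dr)\cap K|=\rho\sum_{s\in K'}|K_{[s]}|.
\]

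Finally I would partition $K'$ according to its cokernel class, $K'=\bigsqcup_{\gamma\in\Coker(d\cdot\underline{\ \ })} K'_{\gamma}$, which turns the sum over $s\in K'$ into the claimed double sum
\[
\rho\sum_{\gamma\in\Coker(d\cdot\underline{\ \ })}\sum_{s\in K'_{\gamma}}|K_{\gamma}|=\rho\sum_{\gamma\in\Coker(d\cdot\underline{\ \ })}|K'_{\gamma}||K_{\gamma}|.
\]
The only subtle point — and essentially the sole obstacle — is justifying that $d\cdot\underline{\ \ }$ is a homomorphism of additive groups with uniform fibre size $\rho$, which uses nothing beyond the distributivity of the ring; everything else is a bookkeeping rearrangement, and the case $d=1$ specialises correctly to Lemma~\ref{basiccount} (where $\Coker$ is trivial and $\rho=1$).
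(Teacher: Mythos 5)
Your proposal is correct and follows essentially the same route as the paper's proof: characteristic functions, interchanging the order of summation, the observation that $d\cdot\underline{\ \ }$ has fibres of uniform size $\rho$ over its image, identification of the inner count as $|K_{[s]}|$, and the final partition of $K'$ into cokernel classes. The only cosmetic difference is that you extract the factor $\rho$ after swapping the sums rather than before, which changes nothing of substance.
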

\begin{proof}
\begin{align*}
\sum\limits_{r\in R}|(K'+dr)\cap K|
&= \rho\sum\limits_{r\in \Ima(d\cdot-)}|(K'+r)\cap K|\\
 &= \rho\sum\limits_{r\in \Ima(d\cdot-)} \sum\limits_{s\in K'} \chi_{K}(s+r)\\
 &= \rho \sum\limits_{s\in K'} \sum\limits_{r\in \Ima(d\cdot-)} \chi_{K}(s+r)\\
 &= \rho\sum\limits_{s\in K'} |K_{[s]}|\\
 &= \rho\sum\limits_{\gamma\in K'/\Ima(d\cdot-)} |K'_{\gamma}||K_{\gamma}|=\rho\sum\limits_{\gamma\in \Coker(d\cdot-)} |K'_{\gamma}||K_{\gamma}|.
\end{align*}
As to the last equality, note that if $\gamma$ is not in the image $K'/\Ima(d\cdot-)$ of $K'$ under the canonical projection onto the cokernel, then $|K'_{\gamma}|=0$.
\end{proof}

Let us point out a curious fact. In Lemma~\ref{gencount}, the cases $d=1$ and $d=0$ correspond to the formulas
\[\sum\limits_{r\in R}|(K'+r)\cap K|=|K'||K|\] 
and
\[\sum\limits_{r\in R}|K'\cap K|=|R||K'\cap K|\]
respectively; see \cite[Lemma~48]{MazzolaTopos}.

The following corollary illustrates the lemma in the important case when the ring is $\mathbb{Z}_{n}$.

\begin{corollary}\label{countint} Let $K$ and $K'$ be subsets of $\mathbb{Z}_{n}$ and $d\in \mathbb{Z}_{n}$. The equation
\begin{equation}\label{forint}
\sum\limits_{r=0}^{n-1}|(K'+dr)\cap K|=\rho\sum\limits_{i=0}^{\rho-1}|K'_{i}||K_{i}|
\end{equation}
holds, where 
\[X_{i}=\{x\in X\ |\ x \equiv i\pmod{\rho}\}\]
for each $X\subseteq \mathbb{Z}_{12}$ and $\rho=\gcd (d,n)$.
\end{corollary}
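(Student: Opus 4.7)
The plan is to reduce the corollary to a direct specialization of Lemma~\ref{gencount}. Since the lemma is already proved for an arbitrary finite ring, the only work is to identify the relevant kernel, image, and cokernel for $R=\mathbb{Z}_n$ and to show that the indexing sets $X_\gamma$ of the lemma coincide with the residue-class sets $X_i$ of the corollary. No new combinatorial or analytic argument is needed.

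First, I would identify the kernel of the $\mathbb{Z}_n$-endomorphism $d\cdot\underline{\ \ }:\mathbb{Z}_n\longrightarrow \mathbb{Z}_n$. An element $r\in\mathbb{Z}_n$ lies in $\Ker(d\cdot\underline{\ \ })$ exactly when $n\mid dr$, and a standard number-theoretic computation shows that the solution set is the cyclic subgroup of order $\gcd(d,n)$. Hence $|\Ker(d\cdot\underline{\ \ })|=\gcd(d,n)=\rho$, which reconciles the definition of $\rho$ in the lemma with the definition in the corollary.

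Next, I would identify the image and cokernel. The image $d\cdot\mathbb{Z}_n$ equals the subgroup $\gcd(d,n)\mathbb{Z}_n=\rho\mathbb{Z}_n$, and consequently $\Coker(d\cdot\underline{\ \ })=\mathbb{Z}_n/\rho\mathbb{Z}_n$ is a cyclic group of order $\rho$. Two elements $x,y\in\mathbb{Z}_n$ determine the same class in this cokernel if and only if $\rho\mid(x-y)$, i.e., if and only if $x\equiv y\pmod{\rho}$. Since $\rho$ divides $n$, the map sending $x\in\mathbb{Z}_n$ to its residue modulo $\rho$ is a well-defined bijection between $\Coker(d\cdot\underline{\ \ })$ and $\{0,1,\dots,\rho-1\}$.

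Under this bijection, for any subset $X\subseteq\mathbb{Z}_n$ the lemma's partition blocks $X_\gamma$ correspond term-by-term to the corollary's blocks $X_i=\{x\in X\mid x\equiv i\pmod{\rho}\}$. Substituting this identification on the right-hand side of Equation~\eqref{forgen} of Lemma~\ref{gencount} yields Equation~\eqref{forint} of the corollary. The only minor pitfall is making sure the indexing is consistent (that the representatives $0,1,\dots,\rho-1$ genuinely exhaust the cokernel and that classes disjoint from $K'$ contribute zero, as already noted in the proof of Lemma~\ref{gencount}); both are immediate from the divisibility $\rho\mid n$. This completes the plan.
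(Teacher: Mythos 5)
Your proposal is correct and follows essentially the same route as the paper: specialize Lemma~\ref{gencount} to $R=\mathbb{Z}_n$, identify $|\Ker(d\cdot\underline{\ \ })|=|\Coker(d\cdot\underline{\ \ })|=\gcd(d,n)=\rho$ with $\Coker(d\cdot\underline{\ \ })\cong\mathbb{Z}_{\rho}$, and match the blocks $X_{\gamma}$ with the residue classes $X_i$ modulo $\rho$. The only cosmetic difference is that you compute the kernel directly while the paper gets its cardinality from the cokernel via the index and isomorphism theorems.
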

\begin{proof}
Note that for each ring $R$ (isomorphism and index theorems for groups)
\[|\Ker(d\cdot-)|=|R|/|\Ima(d\cdot-)|=|R/\Ima(d\cdot-)|=|\Coker(d\cdot-)|.\]
If $R=\mathbb{Z}_n$, then by \cite[Theorem 6.14]{Fraleigh}, $R/\Ima(d\cdot-)=\mathbb{Z}_n/d\mathbb{Z}_n=\mathbb{Z}_n/\rho\mathbb{Z}_n\cong \mathbb{Z}_{\rho}$, where $\rho=\gcd(d,n)$, so $|\Ker(d\cdot-)|=|\Coker(d\cdot-)|=\rho$. 
Moreover, $[x]=\gamma$ for $\gamma\in \Coker(d\cdot-)$ if and only if $x \equiv i\pmod{\rho}$ by identifying $\gamma$ and $i$ through the isomorphism $\Coker(d\cdot-)\cong \mathbb{Z}_{\rho}$.
\end{proof}

\begin{example}\label{vectors}
Let $(K,D,e^2 5)$ be the Renaissance strong dichotomy of $\mathbb{Z}_{12}$, with $K=\{0,3,4,7,8,9\}$ and $D=\{1,2,5,6,10,11\}$. The following table contains the particular values of the right-hand side of Equation~\eqref{counting}, by using Equation~\eqref{forint} with $K'=cK+v$, for $d$ ranging over $\mathbb{Z}_{12}$. After the table, we briefly justify each row.
\begin{center}
\begin{tabular}{|l|l|l|}
\hline

$d$ & $\rho$ &  $\sum\limits_{r=0}^{11}|(K'+dr)\cap K|$ \\ \hline
 0 & 12 & $12(|K'_0|+|K'_3|+|K'_4|+|K'_7|+|K'_8|+|K'_9|)$ \\ \hline
 1, 5, 7, 11 & 1 & 36 \\ \hline
 2, 10 & 2 & 36 \\ \hline
 3, 9 & 3 &  $3(|K'_0|3+|K'_1|2+|K'_2|)$ \\ \hline
 4, 8 & 4 & $4(|K'_0|3+|K'_1|+|K'_3|2)$ \\ \hline
 6 & 6 & $6(|K'_0|+|K'_1|+|K'_2|+|K'_3|2+|K'_4|)$ \\ \hline
\end{tabular}
\end{center}

\textit{Case} $d=0$. The number of elements in $K$ congruent to $i$ modulo $12$ is just given by the characteristic function of $K$ as a subset of $\mathbb{Z}_{12}$.

\textit{Case} $d=1$, $5$, $7$, $11$. Since all integers are congruent modulo 1, we obtain the expression $|K'||K|$ for the sum, whose exact value is $6\times 6$.  

The \textit{case} $d=2$, $10$ is quite interesting. The integers congruent to $0$ (respectively $1$) modulo $2$ are the even (respectively odd) ones. Now, there are three even numbers ($0$, $4$, and $8$) and three odd numbers ($3$, $7$, and $9$) in $K$. Thus, the counting formula becomes $2(|K'_0|3+|K'_1|3)$. But multiplying $K$ by $c$ (which is always odd because it is invertible) does not alter the parity of its elements, and adding $v$ to $cK$ does not alter the number of odd or even elements. For this reason, $|K'_0|=|K'_1|=3$ and the exact value of the counting formula is $2( (3\times 3)+ (3\times 3))$.

\textit{Case} $d=3$, $9$. In this case, $|K_0|=|\{0,3,9\}|=3$, $|K_1|=|\{4,7\}|=2$, $|K_2|=|\{8\}|=1$, and the remaining terms of the form $K_i$ are empty.

\textit{Case} $d=4$, $8$. Here $|K_0|=|\{0,4,8\}|=3$, $|K_1|=|\{9\}|=1$, $|K_3|=|\{3,7\}|=2$, and the remaining terms of the form $K_i$ are empty.

\textit{Case} $d=6$. Here $|K_0|=|\{0\}|=1$, $|K_1|=|\{7\}|=1$, $|K_2|=|\{8\}|=1$, $|K_3|=|\{3,9\}|=2$, $|K_4|=|\{4\}|=1$, and the remaining terms of the form $K_i$ are empty. \fin
\end{example}

Another important consequence of the equation 
\[\sum\limits_{r\in R}|(K'+dr)\cap K|
= \rho_{d}\sum\limits_{r\in \Ima(d\cdot-)}|(K'+r)\cap K|\]
in the proof of Lemma~\ref{gencount}
is that if $\Ima(d\cdot-)=\Ima(d'\cdot-)$, then 
\[\rho_{d}=|\Ker(d\cdot-)|=|R|/|\Ima(d\cdot-)|=|R|/|\Ima(d'\cdot-)|=\rho_{d'}\]
and hence 
\[\sum\limits_{r\in R}|(K'+dr)\cap K|
=\sum\limits_{r\in R}|(K'+d'r)\cap K|.\]

Moreover, in the case when $R=\mathbb{Z}_n$, we observe from Equation~\eqref{forint} that we can reduce the computation of the sums of the form $\sum\limits_{r\in R}|(K'+dr)\cap K|$ for all $d$ with the same $\rho$ (recall that $\rho=\gcd(d,n)$) to that of the sum $\sum\limits_{r\in R}|(K'+\rho r)\cap K|$, since these sums coincide.

\subsection{Main counting formulas}\label{commufor}

Now, according to Equation~\eqref{counting} (with $\alpha=0$), we will focus on the case when $K'=cK+v$. Let us assume that $d$ is in the \textit{center} $Z(R)$ of $R$. For each $d$, the sequence $(|K'_{\gamma}|)_{\gamma}$ is a \textit{rearrangement}\footnote{A rearrangement of a function (sequence) $f:S\longrightarrow X$ is a function of the form $f\circ \sigma$, where $\sigma$ is a permutation of $S$.} of $(|K_{\gamma}|)_{\gamma}$, where $\gamma$ ranges over $\Coker(d\cdot-)$. In fact, note first that
\begin{align*}
|K'_{[r]}|&=|\{e^vc(k)\ |\ k\in K\text{ and }[e^vc(k)]=[r]  \text{ in } \Coker(d\cdot-)\}|\\
&=|\{e^vc(k)\ |\ k\in K\text{, }ck+v-r \in dR\}|\\
&=|\{e^vc(k)\ |\ k\in K\text{, }k-(c^{-1}r- c^{-1}v)\in dR\}|\\
&=|\{e^vc(k)\ |\ k\in K\text{, }k-(e^vc)^{-1}(r)\in dR\}|\\
&=|\{e^vc(k)\ |\ k\in K\text{, }[k]=[(e^vc)^{-1}(r)]\text{ in } \Coker(d\cdot-)\}|\\
&=|\{k\in K\ |\ [k]=[(e^vc)^{-1}(r)]\}|=|K_{[(e^vc)^{-1}(r)]}|.
\end{align*}
Second, since $d\in Z(R)$, the function (actually a ring symmetry)
\[\begin{array}{cccc}
[e^vc]:&\Coker(d\cdot-) & \longrightarrow & \Coker(d\cdot-)\\
&{[r]} & \longmapsto     & [e^vc(r)]
\end{array}\]
is well defined (check) with inverse given by the function induced by $(e^vc)^{-1}$, so $[e^vc]$ and $[(e^vc)^{-1}]$ are permutations of the cokernel. Thus, regarding $(|K'_{\gamma}|)_{\gamma}$ and $(|K_{\gamma}|)_{\gamma}$ as functions from the cokernel to $\mathbb{N}$, the former is the composite of the latter with  $[(e^vc)^{-1}]$. Also, note that $[(e^vc)^{-1}]=[e^{c^{-1}v}c^{-1}]=e^{[c]^{-1}[v]}[c]^{-1}$.  This proves the main counting formulas.

\begin{theorem}[\textbf{Main counting formulas}]\label{mainform}
Let $R$ be a finite ring, $d\in Z(R)$, $K\subseteq R$, and $e^{v\x}(c+d\x)$ a symmetry in $H$. The equation
\begin{equation}\label{maincount}
|e^{v\x}(c+d\x)(K[\x])\cap K[\x]|=\rho\sum\limits_{\gamma\in \Coker(d\cdot-)}|K_{[(e^vc)^{-1}](\gamma)}||K_{\gamma}|=\rho\sum\limits_{\delta\in \Coker(d\cdot-)}|K_{\delta}||K_{[e^vc](\delta)}|
\end{equation}
holds, where $K_{\gamma}=\{x\in K\ |\ [x]=\gamma\}$ and $\rho=|\Ker(d\cdot-)|$. In particular, if $R=\mathbb{Z}_n$, the right-hand term of Equation~\eqref{maincount} coincides with 
\begin{equation}\label{maincountforint}
\rho\sum\limits_{i=0}^{\rho-1}|K_{i}||K_{e^vc(i)}|,
\end{equation}
where $K_{i}=\{x\in K\ |\ x \equiv i\pmod{\rho}\}$, $\rho=\gcd (d,n)$, and $e^vc$ is reduced modulo $\rho$.
\end{theorem}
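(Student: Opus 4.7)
The plan is to collect the ingredients already assembled in the discussion preceding the theorem and put them together cleanly. First I would start from Equation~\eqref{counting},
\[
|e^{v\epsilon}(c+d\epsilon)(K[\epsilon])\cap K[\epsilon]|=\sum_{r\in R}|(cK+v+dr)\cap K|,
\]
and apply Lemma~\ref{gencount} with $K'=cK+v$ to rewrite the right-hand side as
\[
\rho\sum_{\gamma\in \Coker(d\cdot\underline{\ \ })}|K'_{\gamma}||K_{\gamma}|,
\]
where $\rho=|\Ker(d\cdot\underline{\ \ })|$. This reduces the theorem to identifying $|K'_{\gamma}|$ with $|K_{[(e^vc)^{-1}](\gamma)}|$.

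Next I would justify that $[e^vc]$ is a well-defined bijection of $\Coker(d\cdot\underline{\ \ })$. The commutativity of $R$ enters here, since for $x-y\in dR$ we have $e^vc(x)-e^vc(y)=c(x-y)\in c\cdot dR=dR$; so $e^vc$ descends to the quotient, and since $(e^vc)^{-1}=e^{-c^{-1}v}c^{-1}$ descends by the same argument, $[e^vc]$ is a permutation. Then the chain of equalities
\begin{align*}
|K'_{[r]}|&=|\{e^vc(k)\mid k\in K,\ [e^vc(k)]=[r]\}|\\
&=|\{k\in K\mid [k]=[(e^vc)^{-1}(r)]\}|\\
&=|K_{[(e^vc)^{-1}](r)]}|
\end{align*}
follows by the bijectivity of $e^vc$ on $R$ and the compatibility of $(e^vc)^{-1}$ with the quotient. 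This is the step already worked out in the paragraph before the statement, and it is where almost all of the genuine content lies.

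Substituting this identification into Lemma~\ref{gencount} yields the first equality of the theorem. For the second equality, I would perform the change of summation index $\alpha=[(e^vc)^{-1}](\gamma)$, equivalently $\gamma=[e^vc](\alpha)$, which is legitimate because $[e^vc]$ is a bijection of $\Coker(d\cdot\underline{\ \ })$. Finally, for the specialization to $R=\mathbb{Z}_n$, I would invoke Corollary~\ref{countint} (or reprove the identification used in its proof), noting that the canonical isomorphism $\Coker(d\cdot\underline{\ \ })\cong \mathbb{Z}_{\rho}$ with $\rho=\gcd(d,n)$ sends the class of $x$ to $x\bmod\rho$, so that $K_{\gamma}$ becomes $K_i$ for the matching residue $i$, and the induced map $[e^vc]$ on $\mathbb{Z}_{\rho}$ is precisely $e^vc$ read modulo $\rho$.

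The only non-routine step is verifying well-definedness of $[e^vc]$ on the cokernel, and this is the precise place where the commutativity hypothesis on $R$ is used; the rest is bookkeeping built on Lemma~\ref{gencount} and the bijection $e^vc\colon R\to R$.
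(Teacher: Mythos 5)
Your proposal is correct and follows essentially the same route as the paper: Equation~\eqref{counting} plus Lemma~\ref{gencount} with $K'=cK+v$, the identification $|K'_{\gamma}|=|K_{[(e^vc)^{-1}](\gamma)}|$ (where commutativity enters, exactly as you isolate it, to make $[e^vc]$ a well-defined permutation of the cokernel), the change of variable $\alpha=[(e^vc)^{-1}](\gamma)$ for the second equality, and Corollary~\ref{countint} for the case $R=\mathbb{Z}_n$. No gaps to report.
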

\begin{proof}
Combine the previous discussion with Equation~\eqref{counting}, Lemma~\ref{gencount}, and Corollary~\ref{countint}.
The second equality in Equation~\eqref{maincount} follows from the first one and the change of variable $\delta=[(e^vc)^{-1}](\gamma)$.
\end{proof}
\subsection{Maximization criterion}\label{secmax}

The following maximization criterion helps to find symmetries $h$ such that $|h(K[\x])\cap K[\x]|$ is maximum among all symmetries with $h=e^{v\x}(c+d\x)$ and $d\in Z(R)$ fixed. It is important to emphasize that it need not find symmetries with maximum values among all symmetries satisfying conditions 1 and 2 of contrapuntal symmetry, but is very useful to discard a number of symmetries whose values are not maximum. 

\begin{theorem}[\textbf{Maximization criterion}]\label{max}  Assume the hypotheses of Theorem~\ref{mainform}. The right-hand side of Equation~\eqref{maincount}
\[\rho\sum\limits_{\gamma\in \Coker(d\cdot-)}|K_{\gamma}||K_{[e^vc](\gamma)}|\]
is maximum, for $e^vc$ ranging over all symmetries of $R$, if and only if $|K_{[e^vc](\gamma)}|=|K_{\gamma}|$ for each $\gamma\in \Coker(d\cdot-)$. In particular, the sum is maximum if $e^vc\equiv e^01\pmod{d}$, that is,\footnote{\textbf{Definition}: we say that $x\equiv y\pmod{d}$ if $[x]=[y]$ in $R/\Ima(d\cdot-)$.} if $v\equiv 0\pmod{d}$ and $c\equiv 1\pmod{d}$. Further, the maximum value is 
\[\rho\sum\limits_{\gamma\in \Coker(d\cdot-)}|K_{\gamma}|^2.\]
\end{theorem}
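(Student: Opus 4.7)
The plan is to recognize the right-hand side of Equation~\eqref{maincount} as the inner product of a nonnegative sequence with a rearrangement of itself, and then to invoke the variation of the rearrangement inequality proved in the appendix. Set $a_{\alpha}:=|K_{\alpha}|$ for $\alpha\in\Coker(d\cdot\underline{\ \ })$, and write $\sigma$ for the permutation $[e^vc]$ of the finite set $\Coker(d\cdot\underline{\ \ })$---this map is well defined and bijective by the commutativity of $R$, as already verified in the derivation preceding Theorem~\ref{mainform}. The quantity to maximize over all symmetries $e^vc$ of $R$ becomes
\[
\rho\sum_{\alpha\in\Coker(d\cdot\underline{\ \ })}a_{\alpha}\,a_{\sigma(\alpha)}.
\]

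The key step is the appendix's inequality in its self-paired form: for any finite nonnegative family $(a_{\alpha})_{\alpha}$ and any permutation $\sigma$ of the index set,
\[
\sum_{\alpha}a_{\alpha}\,a_{\sigma(\alpha)}\leq \sum_{\alpha}a_{\alpha}^{2},
\]
with equality precisely when $a_{\sigma(\alpha)}=a_{\alpha}$ for every $\alpha$. Substituting this back gives simultaneously the characterization of maximality, namely $|K_{[e^vc](\alpha)}|=|K_{\alpha}|$ for all $\alpha$, and, upon multiplying by $\rho$, the explicit maximum value $\rho\sum_{\gamma}|K_{\gamma}|^{2}$.

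For the stated sufficient condition, I would check that $e^vc\equiv e^01\pmod{d}$ forces $\sigma$ to be the identity of $\Coker(d\cdot\underline{\ \ })$: the hypotheses $v\equiv 0$ and $c\equiv 1$ modulo $\Ima(d\cdot\underline{\ \ })$ imply $cr+v\equiv r\pmod{d}$ for every $r\in R$, so $\sigma(\alpha)=\alpha$ identically, and the equality condition above is met tautologically.

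The main obstacle is not in this argument but in the appendix: one needs a version of the rearrangement inequality whose equality clause identifies the maximizers through the \emph{numerical} equalities $a_{\sigma(\alpha)}=a_{\alpha}$, rather than through combinatorial constraints on $\sigma$ (which would be weaker in the presence of repeated values of $|K_{\alpha}|$). Once that variation is granted, the theorem reduces to the purely formal substitution sketched above.
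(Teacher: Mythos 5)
Your proposal is correct and follows essentially the same route as the paper: the paper's proof also applies the appendix's variation of the rearrangement inequality (Theorem~\ref{r.i.}) to the sequence $(|K_{\gamma}|)_{\gamma}$ paired with its rearrangement $(|K_{[e^vc](\gamma)}|)_{\gamma}$, whose equality clause is exactly the numerical condition $a_{\sigma(\alpha)}=a_{\alpha}$ that you single out as the needed ingredient. Your explicit check that $e^vc\equiv e^01\pmod{d}$ makes $[e^vc]$ the identity on $\Coker(d\cdot\underline{\ \ })$ is a harmless elaboration of what the paper leaves implicit.
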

\begin{proof}
Apply the rearrangement inequality (Theorem~\ref{r.i.}) to the sequences $(|K_{\gamma}|)_{\gamma}$ and $(|K_{[e^vc](\gamma)}|)_{\gamma}$, the latter being a rearrangement of the former by Section~\ref{commufor}.
\end{proof}

Certainly, the criterion is useful because if a symmetry $e^{v\x}(c+d\x)$ satisfies the conditions 1 and 2 for a contrapuntal symmetry and $e^vc\equiv e^01\pmod{d}$, then we get rid of all symmetries with the same $d$ that induce a rearrangement of $(K_{\gamma})_{\gamma}$, which are usually all but those satisfying $e^vc\equiv e^01\pmod{d}$.

\section{The little theorem of first-species counterpoint}\label{little}

The task of finding maximum cardinals subject to the conditions 1 and 2 seems to be difficult in general. In the \textit{Little theorem of counterpoint}, which we prove for the dual numbers case, we establish that each consonance has at least $|K|^2$ admitted successors. For example, in Renaissance counterpoint, each consonance has at least 36 admitted successors. However, this approximation theorem does not provide the admitted successors explicitly, which requires a greater effort.

The proof idea of the little theorem of counterpoint is the following. If we prove that there is at least an $h\in H$ satisfying the conditions 1 and 2 in Theorem~\ref{redu} for all consonances $k\x \in K[\x]$, then we deduce that the number of admissible successors is at least $|h(K[\x])\cap K[\x]|$ by the maximum property of admitted successors sets associated with contrapuntal symmetries. Finally, Theorem~\ref{mainform} gives the exact value of $|h(K[\x])\cap K[\x]|$.

So as to prove the existence of such an $h$, we first need to give concrete criteria for deciding when an $h\in H$ satisfies 1 or 2. We do this in Sections \ref{1con} and \ref{2con}. Finally, we translate our result to any consonance $z+k\x$, by using the bijective translation $e^z$. 
\subsection{The first condition criterion}\label{1con}
Regarding the condition 1 in Theorem~\ref{redu}, given $h=e^{v\x}(c+d\x)$, we have the following equivalences, where $p$ is the polarity $e^ab$ of $(K,D,e^ab)$.
\begin{align*}
k\x \in h(D[\x]) & \Leftrightarrow k \in (c+d\alpha)D+v \\
                             & \Leftrightarrow k \in (c+d\alpha)\cdot p(K)+v\\
                             & \Leftrightarrow v=k-(c+d\alpha)\cdot p(s)\text{ for some }s\in K
\end{align*}

\subsection{The second condition criterion}\label{2con}
As to 2 in Theorem~\ref{redu}, given $h=e^{v\x}(c+d\x)$, we have the following equivalences.
\begin{align*}
p\circ e^v(c+d\alpha)=e^v(c+d\alpha)\circ p& \Leftrightarrow e^ab\circ e^v(c+d\alpha)=e^v(c+d\alpha)\circ e^ab\\
                             & \Leftrightarrow e^{bv+a}b(c+d\alpha)=e^{(c+d\alpha)a+v}(c+d\alpha)b\\
                             & \Leftrightarrow bv+a=(c+d\alpha)a+v\text{ and }b(c+d\alpha)=(c+d\alpha)b
\end{align*}

\subsection{The little theorem}\label{lower}

In the dual numbers case ($\alpha=0$), we can prove the existence of an $h\in H$ with $d=1$ that satisfies 1 and 2 in Theorem~\ref{redu}, for the consonance $k\e$. In fact, according to Sections  \ref{1con} and \ref{2con}, it is enough to show that the following system of equations, 
in the unknowns $v\in R$, $c\in R^*$, $d\in R$ and $s\in K$, has at least a solution.
\[\left\lbrace  \begin{matrix}
v=k-cbs-ca\\
bv+a = ca+v \\
bc = cb
 \end{matrix} \right.\] 
Certainly, it has the solution $v=-ba=a$, $c=b$, $d=1$, $s=k$ (note that $b^2=1$ and $ba+a=0$ since $e^ab$ is involutive by Proposition~\ref{involut}). To conclude, $e^{-ba\e}(b+\e)$ is the desired $h$.

\begin{lemma}\label{prelow}
Let $R$ be a finite ring and $(K,D,p)$ a strong dichotomy. Each consonance $k\e$ has at least $|K|^2$ admitted successors.
\end{lemma}
\begin{proof}
If $h\in H$ satisfies $d=1$, then, according to Lemma~\ref{gencount},
\[
|h(K[\e])\cap K[\e]|=\sum\limits_{r\in R}|(cK+v+r)\cap K|=|cK+v||K|=|K|^2.\]
Hence, $h$, with $h=e^{-ba\e}(b+\e)$, satisfies 1, 2, and the previous equation. Now, let $N$ be the number of admitted successors of $k\e$. If $h'$ satisfies the conditions of Theorem~\ref{redu}, for $k\e$, then \[N\geq |h'(K[\e])\cap K[\e]|\geq |h(K[\e])\cap K[\e]|=|K|^2.\]
\end{proof}

An upper bound for a single contrapuntal symmetry can be obtained as well. First, suppose that $d$ is not $0$. Note that 
\[\sum_{r\in R}|(cK+v+dr)\cap K|=\rho \sum_{r\in \Ima(d\cdot-)}|(cK+v+r)\cap K|,\]
where $d\cdot-:R\longrightarrow R$ is the $R$-endomorphism that sends an element $r\in R$ to $dr$, and $\rho =| \Ker(d\cdot-)|$.

In the case when $e^{v+r}c$ is not the identity $e^01$, note that $|(cK+v+r)\cap K|$ is at most $|K|-1$ since $K$ is rigid. On the other hand, there is at most a value of $r$ in $\Ima(d\cdot-)$ that makes $e^{v+r}c$ the identity, and hence there is at most an $r$ such that $|(cK+v+r)\cap K|=|K|$. Thus, 
\begin{align*}
\rho \sum_{r\in \Ima(d\cdot-)}|(cK+v+r)\cap K| &\leq \rho [(|\Ima(d\cdot-)|-1)(|K|-1)+|K| ]\\
& = \rho \left[\left(\frac{|R|}{\rho}-1\right)(|K|-1)+|K|\right]\\
& =\rho \left[\left(\frac{2|K|}{\rho}-1\right)(|K|-1)+|K|\right]\\
& = 2|K|^2-2|K|+\rho\\
&\leq 2|K|^2-2|K|+|K|=2|K|^2-|K|.
\end{align*}
As to the last inequality note that $\rho$, as a divisor of $|R|$ that is not\footnote{In fact $\rho$, which is by definition $|\Ker(d\cdot-)|$, is equal to $|R|$ if and only if $d=0$--but we are assuming $d\neq 0$.} $|R|$, must be less than or equal to $|K|$ (which coincides with $|R|/2$), the greatest divisor of $|R|$ different from $|R|$.  

Now suppose that $d=0$. So as to find an upper bound for 
\[\sum_{r\in R}|(cK+v)\cap K|\]
we consider two cases. If $c\neq 1$, then $e^vc$ is not the identity $e^01$. If $c=1$, then, since we require $h$ (for $h=e^{v\e}c$) to satisfy condition 1 in Theorem~\ref{redu}, by Section \ref{1con}, $v=k- p(s)$ for some $s\in K$. This means that $v\neq 0$ since $p(s)\in D$, and hence $e^vc$ is not the identity. In both cases, $|(cK+v)\cap K|\leq |K|-1$ since $K$ is rigid. Thus,
\[\sum_{r\in R}|(cK+v)\cap K|\leq |R|(|K|-1)=2|K|(|K|-1).\]

To sum up, collecting the results for $d\neq 0$ and $d=0$ and using Theorem \ref{redu}, we obtain the upper bound in the following theorem.

\begin{theorem}[\textbf{Little theorem of counterpoint}]\label{low}
Let $R$ be a finite ring and $(K,D,p)$ a strong dichotomy. Each consonance $x+k\e$ has at least $|K|^2$  admitted successors, and at most $2|K|^{2}-|K|$ for a single contrapuntal symmetry.
\end{theorem}
\begin{proof}
By Theorem~\ref{redu}, $e^z$ is a bijection between the sets of admitted successors of $k\e$ and $z+k\e$. Thus, the number of admitted successors of $z+k\e$ is equal to the number of admitted successors of $k\e$. The lower bound now follows from Lemma \ref{prelow}. The upper bound corresponds to the preceding discussion.
\end{proof}

\subsection{Steps for the computation of contrapuntal symmetries}
The steps for calculating the contrapuntal symmetries $e^{v\x}(c+d\x)$ in $H$, for a consonance of the form $k\x$, are the following. We start with a strong dichotomy $(K,D,e^ab)$.
\begin{enumerate}
\item Solve the following system of equations in the unknowns $c\in R^*$, $v$, and $d$.
\[\left\lbrace  \begin{matrix}
bv+a = (c+d\alpha)a+v \\
b(c+d\alpha)=(c+d\alpha)b\\
 \end{matrix} \right.\]
 This corresponds to condition 2 of contrapuntal symmetry (Section \ref{2con}).
\item For each consonance $k\x$, among the symmetries of the form $e^{v\x}(c+d\x)$ obtained in 1, choose those with $v\in k-cD$. This corresponds to condition 1 of contrapuntal symmetry (Section \ref{1con}). The reason for first computing the symmetries satisfying condition 2 is that those symmetries are usually less than those satisfying condition 1, so we perform less operations.
\item For each consonance $k\x$, among the symmetries $e^{v\x}(c+d\x)$ obtained in 2, choose those such that the right-hand term of Equation~\eqref{forgen}  
\[\rho\sum\limits_{\gamma\in \Coker(d\cdot-)}|(cK+v)_{\gamma}||K_{\gamma}|\]
with $K'=cK+v$ is maximum. This ensures that the condition 3 holds by Equation~\eqref{counting}. In the case when $R$ is commutative, maximize the right-hand side of Equation~\eqref{maincount}. In the case when $R=\mathbb{Z}_n$, maximize Equation~\eqref{maincountforint}. So as to discard a number of symmetries, if $R$ is commutative, Theorem~\ref{max} can be used.
\end{enumerate} 

In the next section we apply the previous steps to computing the contrapuntal symmetries in the case of first-species Renaissance counterpoint.
\section{First-species Renaissance counterpoint}\label{hand}

Let $(K,D,e^2 5)$ be the Renaissance strong dichotomy of $\mathbb{Z}_{12}$, with $K=\{0,3,4,7,8,9\}$ and $D=\{1,2,5,6,10,11\}$. Next, we compute the respective contrapuntal symmetries and admitted successors, in the dual numbers case ($\alpha=0$). In Section~\ref{1var}, we explain a simple procedure for obtaining the contrapuntal symmetries in the case $\alpha=1$.

\subsection{Condition 2}\label{secondcond}

By the commutativity of $\mathbb{Z}_{12}$, the condition 2 for symmetries $e^{v\e}(c+d\e)\in H$, $c\in R^*$, reduces to solve the equation
\[5v+2=c2+v,\]
which is equivalent to
\[4v=2(c-1).\]
If $c=1,7$, then the equation becomes $4v=0$, so $v=0,3,6,9$. If $c=5,11$, then the equation becomes $4(v-2)=0$ and hence $v=2,5, 8,11$. The solutions are summarized in the following table.
\begin{center}
\begin{tabular}{|l|l|}
\hline
$c$ & $v$  \\ \hline
 1, 7 & 0, 3, 6, 9 \\ \hline
 5, 11 & 2, 5, 8, 11 \\ \hline
\end{tabular}
\end{center}

\subsection{Condition 1}\label{firstcond}

Now, among these solutions we choose, for each consonance $k\e$, those satisfying 
\[v\in k-cD.\] 
In the following table, we organize the results of the operations involved. Specifically, we compute $3\mathbb{Z}_{12}\cap (k-cD)$ for $c=1,7$ and $(2+3\mathbb{Z}_{12})\cap (k-cD)$ for $c=5,11$.

\begin{center}
\begin{tabular}{|l|l|l|l|l|l|l|}
\hline

\backslashbox{$c$}{$k$}& 0 &  3 &4 & 7& 8& 9\\ \hline
1 & $\{6\}$ &$\{9\}$ &$\{3,6\}$  &$\{6,9\}$ & $\{3,6,9\}$ & $\{3\}$\\ \hline
5&  $\{2,5,11\}$ & $\{2,5,8\}$ &$\{2,11\}$&$\{2,5\}$&$\{2\}$&$\{2,8,11\}$\\ \hline
 7 & $\{6\}$ &$\{9\}$ &$\{6,9\}$  &$\{0,9\}$ & $\{3,6,9\}$ & $\{3\}$ \\ \hline
 11 &$\{2,5,11\}$ & $\{2,5,8\}$ &$\{2,5\}$&$\{5,8\}$&$\{2\}$&$\{2,8,11\}$\\ \hline
\end{tabular}
\end{center}

We can easily fill in the table as follows. There are six entries, namely those labelled by $(c,k)$ with $c=1,5$ and $k=0,4,8$, which we can start from, the other being obtained by using certain symmetries. In fact, note that \[7(3\mathbb{Z}_{12}\cap (k-D))=3\mathbb{Z}_{12}\cap (7k-7D)),\] \[7((2+3\mathbb{Z}_{12})\cap (k-5D))=(2+3\mathbb{Z}_{12})\cap (7k-11D),\]
 \[\pm 3+(3\mathbb{Z}_{12}\cap (k-cD))=3\mathbb{Z}_{12}\cap (\pm 3+k-cD),\] 
 and
\[\pm 3+((2+3\mathbb{Z}_{12})\cap (k-cD))=(2+3\mathbb{Z}_{12})\cap (\pm 3+k-cD).\]
For example, the entry $(1,3)$ is obtained by adding $3$ to the entry $(1,0)$, and the entry $(7,0)$ is obtained by multiplying the entry $(1,0)$ by $7$.
\subsection{Maximization}\label{countsym}

In the case of the Renaissance dichotomy, Theorem~\ref{max} allows us to obtain the following table of maximum values, without the restrictions of conditions 1 and 2 of contrapuntal symmetry, and their corresponding symmetries. We do not include the value $d=0$, whose maximum is not useful because the identity does not satisfy the condition 1. After the table we justify the results.

\begin{center}
\begin{tabular}{|l|l|c|l|}
\hline
$d$ & $\rho$ &  maximum $\sum\limits_{r=0}^{11}|(cK+v+dr)\cap K|$ & $e^vc$\\ \hline
 1, 5, 7, 11 & 1 & 36 & any\\ \hline
 2, 10 & 2 & 36& any \\ \hline
 3, 9 & 3 & 42 & $\equiv e^01\pmod{3}$\\ \hline
 4, 8 & 4 & 56 & $\equiv e^01\pmod{4}$\\ \hline
 6 & 6 & 48  & $\equiv e^01\pmod{6}$ \\\hline
\end{tabular}
\end{center}

If $\rho=3$, since all entries of the vector $(|K_0|,|K_1|,|K_2|)$ are different (Example~\ref{vectors}), then the unique rearrangement that coincides with it is the composition with $e^01$, and hence the maximum is only taken for the identity modulo $3$. The same is true for $\rho=4$. If $\rho=6$, then $(|K_0|,|K_1|,|K_2|,|K_3|,|K_4|,|K_5|)=(1,1,1,2,1,0)$. Now, if a symmetry $e^vc$ modulo 6 induces a rearrangement that leaves the vector invariant, then the rearrangement leaves $K_3$ invariant, but $e^vc(3)=3+v=3$ and hence $v=0$. Moreover, $e^05(1)=5$ and hence it does not induce a rearrangement that leaves $K_1$ invariant, so $e^vc=e^01$.  
\begin{center}
\textbf{Contrapuntal symmetries for $0\e$.}
\end{center}

According to the table in Section~\ref{firstcond}, there are exactly two symmetries for $\rho=6$, namely $e^{6\e}(1+6\e)$ and $e^{6\e}(7+6\e)$, with $e^vc$ congruent to $e^01$ modulo $6$. This allows us to discard the cases $\rho=3,2,1$. Up to now the maximum is $48$, and it remains to examine the cases $\rho=4,12$.

In the case when $\rho=4$, the residues modulo 4 of the candidates in the table produce the following sums, calculated with Equation~\eqref{maincountforint}. 

\begin{center}
\begin{tabular}{|l|l|l|l|}
\hline
\backslashbox{$c$}{$v$}& 1 &  2 & 3 \\ \hline
1 & 36 & 16 & 36  \\ \hline
3&  24 & 20 & 48\\ \hline
\end{tabular}
\end{center}

Thus, we have find two new symmetries with sum 48 (the maximum up to now), namely $e^{11 \e}(11+8\e)$ and $e^{11 \e}(11+4\e)$. Here, $e^{11}11 \equiv e^33\pmod{4}$.

It remains to examine the case $\rho=12$. We have the following sums.
\begin{center}
\begin{tabular}{|l|l|l|l|l|}
\hline
\backslashbox{$c$}{$v$}& 6 & 2 & 5 & 11 \\ \hline
1 & 24 &  &  & \\ \hline
7 &  36&  &  &\\ \hline
5 & & 0 &  36&  36\\ \hline
11&  & 12 & 24 & 48\\ \hline
\end{tabular}
\end{center}

This means that the maximum sum is $48$ and that there is yet another symmetry $e^{11\e}11$.

\begin{center}
\textbf{Contrapuntal symmetries for $3\e$}
\end{center}
According to the second table in Section~\ref{firstcond}, there are exactly two symmetries for $\rho=4$, namely $e^{8\e}(5+4\e)$ and $e^{8\e}(5+8\e)$, with $e^vc$ congruent to $e^01$ modulo $4$. We thus discard the cases $\rho=6,3,2,1$. Up to now the maximum is $56$.

The remaining case is $\rho=12$. We have the following sums.

\begin{center}
\begin{tabular}{|l|l|l|l|l|}
\hline
\backslashbox{$c$}{$v$}& 9 & 2 & 5 & 8 \\ \hline
1 & 36 &  &  & \\ \hline
7 &  24&  &  &\\ \hline
5 & & 0 &  36&  48\\ \hline
11&  & 12 & 24 & 36\\ \hline
\end{tabular}
\end{center}

The maximum is 56 and there are no more contrapuntal symmetries.

\begin{center}
\textbf{Contrapuntal symmetries for $4\e$}
\end{center}
There are exactly two symmetries for $\rho=6$, namely $e^{6\e}(1+6\e)$ and $e^{6\e}(7+6\e)$, with $e^vc$ congruent to $e^01$ modulo $6$. This allows us to discard the cases $\rho=3,2,1$. Up to now the maximum is $48$. The sums are less than 48 for $\rho=4$ as shown in the following table.
\begin{center}
\begin{tabular}{|l|l|l|l|}
\hline
\backslashbox{$c$}{$v$}& 1 &  2 & 3 \\ \hline
1 &  & 16 & 36  \\ \hline
3&  24 & 20 & \\ \hline
\end{tabular}
\end{center}
The case $\rho=12$ yields the following sums.

\begin{center}
\begin{tabular}{|l|l|l|l|l|l|l|}
\hline
\backslashbox{$c$}{$v$}& 3 & 6 & 2 & 11 & 9 & 5 \\ \hline
1 & 36 & 24  &  &  & &\\ \hline
5 &  &  & 0 &36 & &\\ \hline
7 & & 36 &  &   &24&\\ \hline
11&  &  & 12 & &&24\\ \hline
\end{tabular}
\end{center}
Hence, the maximum is 48 and there are no more contrapuntal symmetries.

\begin{center}
\textbf{Contrapuntal symmetries for $7\e$}
\end{center}
The case $\rho=12$ yields the following sums.

\begin{center}
\begin{tabular}{|l|l|l|l|l|l|l|}
\hline
\backslashbox{$c$}{$v$}& 0 & 6 & 2 & 8 & 9 & 5 \\ \hline
1 &  & 24  &  &  & 36&\\ \hline
5 &  &  & 0 &  & &36\\ \hline
7 &60 &  &  &   &24&\\ \hline
11&  &  &  & 36&&24\\ \hline
\end{tabular}
\end{center}
Thus, we discard all remaining cases for $\rho$ and the maximum is 60, corresponding to a unique contrapuntal symmetry $e^07$.

\begin{center}
\textbf{Contrapuntal symmetries for $8\e$}
\end{center}
There are exactly two symmetries for $\rho=6$, namely $e^{6\e}(1+6\e)$ and $e^{6\e}(7+6\e)$, with $e^vc$ congruent to $e^01$ modulo $6$. This allows us to discard the cases $\rho=3,2,1$. Up to now the maximum is $48$. 

The sums for $\rho=4$ are the same of the consonance $0\e$. In the table from Section~\ref{firstcond}, we observe that there are two additional symmetries, namely $e^{3\e}(7+4\e)$ and $e^{3\e}(7+8\e)$, with $e^vc$ congruent to $e^33$ modulo $4$.

The case $\rho=12$ yields the following sums.

\begin{center}
\begin{tabular}{|l|l|l|l|l|}
\hline
\backslashbox{$c$}{$v$}& 2 & 3 & 6 & 9 \\ \hline
1 &  &  36 & 24 &36  \\ \hline
5 & 0 &  &  & \\ \hline
7 & & 48 & 36 & 24  \\ \hline
11& 12 &  &  & \\ \hline
\end{tabular}
\end{center}
Hence, the maximum is 48 and there is an additional contrapuntal symmetry $e^{3\e}7$.

\begin{center}
\textbf{Contrapuntal symmetries for $9\e$}
\end{center}
There are exactly two symmetries for $\rho=4$, namely $e^{8\e}(5+4\e)$ and $e^{8\e}(5+8\e)$, with $e^vc$ congruent to $e^01$ modulo $4$. We thus discard the cases $\rho=6,3,2,1$. Up to now the maximum is $56$.

For $\rho=12$ we have the following sums.

\begin{center}
\begin{tabular}{|l|l|l|l|l|}
\hline
\backslashbox{$c$}{$v$}& 3 & 2 & 11 & 8 \\ \hline
1 & 36 &  &  & \\ \hline
7 &  48&  &  &\\ \hline
5 & & 0 &  36&  48\\ \hline
11&  & 12 & 48 & 36\\ \hline
\end{tabular}
\end{center}

The maximum is 56 and there are no more contrapuntal symmetries.

\subsection{Admitted successors}\label{secsuc}

We can finally obtain the list of admitted successors, by directly using Equation~\eqref{presuc} for each contrapuntal symmetry $h$ obtained in Section~\ref{countsym}; see Table \ref{tab:adsu}.   
 
The \textbf{prohibition of parallel fifths} is an important conclusion that we can draw from the table. In fact, the consonance $7\e$ has as set of admitted successors $\mathbb{Z}_{12}+(K\setminus\{7\})\e$, that is, any interval can follow a fifth, except a fifth.

\begin{table}
\begin{center}
\begin{tabular}{|c|c|c|c|}
\hline
$k$ & $|h(K[\e])\cap K[\e]|$ & $h$ & admitted successors of $k\e$ \\ \hline
\multirow{11}{*}{$0$} & \multirow{11}{*}{$48$} & \multirow{2}{*}{ $e^{6\e}(1+6\e)$}& $r+\{3,9\}\e$, $r$ even \\     
 &  &  & $r+K\e$, $r$ odd  \\ \cline{3-4}
 & & \multirow{2}{*}{ $e^{6\e}(7+6\e)$} & $r+\{3,7,9\}\e$, $r$ even  \\ 
 &  &  & $r+(K\setminus\{7\})\e$, $r$ odd  \\ \cline{3-4}
 &  & \multirow{3}{*}{ $e^{11\e}(11+8\e)$}&  $\{0,3,6,9\}+\{3,4,7,8\}\e$\\ 
 &  &  &  $\{1,4,7,10\}+\{0,3,7,8\}\e$ \\ 
  &  &  & $\{2,5,8,11\}+\{0,3,4,7\}\e$ \\ \cline{3-4}
 &  & \multirow{3}{*}{ $e^{11\e}(11+4\e)$}&  $\{0,3,6,9\}+\{3,4,7,8\}\e$\\ 
 &  &  &  $\{1,4,7,10\}+\{0,3,4,7\}\e$ \\ 
  &  &  & $\{2,5,8,11\}+\{0,3,7,8\}\e$ \\ \cline{3-4}
  &  &   $e^{11\e}11$ & $\mathbb{Z}_{12}+\{3,4,7,8\}\e$ \\ \hline
\multirow{6}{*}{$3$}& \multirow{6}{*}{$56$} & \multirow{3}{*}{ $e^{8\e}(5+8\e)$}&  $\{0,3,6,9\}+\{0,4,7,8\}\e$\\ 
 &  &  &  $\{1,4,7,10\}+(K\setminus\{7\})\e$ \\ 
  &  &  & $\{2,5,8,11\}+(K\setminus\{9\})\e$ \\ \cline{3-4}
 &  & \multirow{3}{*}{ $e^{8\e}(5+4\e)$}&  $\{0,3,6,9\}+\{0,4,7,8\}\e$\\ 
 &  &  &  $\{1,4,7,10\}+(K\setminus\{9\})\e$ \\ 
  &  &  & $\{2,5,8,11\}+(K\setminus\{7\})\e$ \\ \hline
\multirow{2}{*}{$4$}& \multirow{2}{*}{$48$} & $e^{6\e}(1+6\e)$&  \multirow{2}{*}{see $k=0$}\\ 
                    &  & $e^{6\e}(7+6\e)$ &  \\ \hline
$7$ & $60$ & $e^07$ & $\mathbb{Z}_{12}+(K\setminus\{7\})\e$ \\ \hline
\multirow{9}{*}{$8$} & \multirow{9}{*}{$48$} & $e^{3\e}7$& $\mathbb{Z}_{12}+\{0,3,4,7\}\e$\\ \cline{3-4}
                     & & $e^{6\e}(1+6\e)$&  \multirow{2}{*}{see $k=0$}\\ 
                    &  & $e^{6\e}(7+6\e)$ &  \\ \cline{3-4}
 &  & \multirow{3}{*}{ $e^{3\e}(7+4\e)$}&  $\{0,3,6,9\}+\{0,3,4,7\}\e$\\ 
 &  &  &  $\{1,4,7,10\}+\{3,4,7,8\}\e$ \\ 
  &  &  & $\{2,5,8,11\}+\{0,3,7,8\}\e$ \\ \cline{3-4}
 &  & \multirow{3}{*}{ $e^{3\e}(7+8\e)$}&  $\{0,3,6,9\}+\{0,3,4,7\}\e$\\ 
 &  &  &  $\{1,4,7,10\}+\{0,3,7,8\}\e$ \\ 
  &  &  & $\{2,5,8,11\}+\{3,4,7,8\}\e$ \\ \hline
\multirow{2}{*}{$9$}& \multirow{2}{*}{$56$} & $e^{8\e}(5+8\e)$&  \multirow{2}{*}{see $k=3$}\\ 
                    &  & $e^{8\e}(5+4\e)$ &  \\ \hline
\end{tabular}
\end{center} 
\caption{Contrapuntal symmetries and admitted successors for the cantus firmus $0$. We obtain the admitted successors of $z+k\e$ by adding $z$ to the cantus firmus of the results.}
\label{tab:adsu}
\end{table}

\section{A non-commutative counterpoint}\label{handnc}
Consider the data of Example \ref{noncomex}. There, we have the strong dichotomy $(K,D,e^I)$ of the ring $R$ of upper triangular matrices on $\mathbb{Z}_2$, with $K=\{\mathbf{0},A_1,A_2,A_3\}$ and $D=\{I,B_1,B_2,B_3\}$, where $B_i=A_i+I$ for $i=1,2,3$. In this case, $b=I$ and $a=I$. Let us compute the contrapuntal symmetries and admitted successors in the dual numbers case.

\subsection{Condition 2}\label{secondcondnc}

We solve the equation
\[v+I=c+v,\]
so $c=I$, and $v$ and $d$ range over $R$.
\subsection{Condition 1}\label{firstcondnc}

Now, among the previous solutions, we take, for each consonance $k\e$, those satisfying
\[v\in k-D=k+D.\] 
The following table contains the results. 

\begin{center}
\begin{tabular}{|l|l|l|l|l|l|l|}
\hline
$k$& $\mathbf{0}$ &  $A_1$ & $A_2$ & $A_3$\\ \hline
$k+D$ & $D$ &$\{B_1,I,A_3,A_2\}$ &$\{B_2,A_3,I,A_1\}$  &$\{B_3,A_2,A_1,I\}$\\ \hline
\end{tabular}
\end{center}

\subsection{Maximization}\label{countsymnc}

Since $c=I$, and $I\in Z(R)$, the maximization criterion (Theorem \ref{max}) remains valid in this case and we use it. 

We start by computing $\Ima (r\cdot-)$, $\rho$, $K/\Ima (r\cdot-)$, and $(|K_{\gamma}|_{\gamma})$ in the following table. The ordering on $\gamma$ is that written for the classes in $K/\Ima (r\cdot-)$. As before, the maximization criterion is not useful for the case $d=\mathbf{0}$ since the identity does not satisfy the condition 1.

\begin{center}
\begin{tabular}{|l|l|c|l|l|}
\hline
$r$ & $rR$ & $\rho$ &  $ K/\Ima (r\cdot -)$ & $(|K_{\gamma}|_{\gamma})$ \\ \hline
  $A_1,A_2$ & $\{\mathbf{0},A_1,A_2,B_3\}$ & 2 & $\{A_1R,A_1R+A_3\}$ & $(3,1)$ \\ \hline
$A_3,I$ & $R$& 1& $\{R\}$ & $(4)$\\ \hline
$B_i$ for $i=1,2,3$ & $\{\mathbf{0},B_i\}$ & 4& $\{\{\mathbf{0},B_i\}, \{A_i,I\},\{A_1,A_2,A_3\}\setminus \{A_i\}\}$ & $(1,1,2,0)$\\ \hline

 \end{tabular}
\end{center}

We have the following maximum values with the restriction $c=I$. In the case $\rho=2$, the unique $[e^v]$ inducing the identity rearrangement of $(3,1)$ is the identity modulo $A_1$. In the case $d=B_1$, if $[e^v]$ induces an identity rearrangement of $(1,1,2,0)$, then necessarily $[e^v]$ leaves invariant $K_{\{A_2,A_3\}}$, which is equal to $\{A_2,A_3\}$. The possibilities are either $v=\mathbf{0}$ or $v=B_1$, that is, $e^v$ is the identity modulo $B_1$. The cases $d=B_2$ and $d=B_3$ are similar.

\begin{center}
\begin{tabular}{|l|l|c|l|}
\hline
$d$ & $\rho$ &  maximum $\sum\limits_{r\in R}|(K+v+dr)\cap K|$ & $v$\\ \hline
 $A_1,A_2$ & 2 & 20 & $\mathbf{0},A_1,A_2,B_3$\\ \hline
 $A_3,I$ & 1 & 16& any \\ \hline
 $B_1$ & 4 & 24 & $\mathbf{0},B_1$\\ \hline
 $B_2$ & 4 & 24 & $\mathbf{0},B_2$\\ \hline
 $B_3$ & 4 & 24  & $\mathbf{0},B_3$ \\\hline
\end{tabular}
\end{center}

Now we must compute the cardinals for $d=\mathbf{0}$, that is, those of the form $8|(K+v)\cap K|$ with $v\neq \mathbf{0}$ by the table in Section \ref{firstcondnc}. We already have the computation of $K+I+k$, which is just $D+k$, for $k\in K$ by the same table. Moreover, $K+A_i=\{\mathbf{0},A_i\}\cup (\{B_1,B_2,B_3\}\setminus \{B_i\})$. Thus, $8|(K+v)\cap K|=0$ if $v=I$ and $8|(K+v)\cap K|= 16$ if $v\notin \{I, \mathbf{0}\}$, so the maximum cardinals (subject to conditions 1 and 2) occur for the symmetries with maximum value $24$ in the preceding table. The following table shows the contrapuntal symmetries directly obtained. 

\begin{center}
\begin{tabular}{|l|l|c|l|}
\hline
$k$            & $h$                                        \\ \hline
$\mathbf{0}$   & $ e^{B_i\e}(I+B_i\e)$ for $i=1,2,3$ \\ \hline
$A_i$ (with $i=1,2,3$)   & $e^{B_i\e}(I+B_i\e)$ \\ \hline
\end{tabular}
\end{center}

\subsection{Admitted successors}\label{secsucnc}

The direct use of Equation~\eqref{presuc} for each contrapuntal symmetry $h$ obtained in Section~\ref{countsymnc} produces the following table. In this example, the maximization criterion is useful to determine all contrapuntal symmetries. 

\begin{center}
\begin{tabular}{|c|c|c|c|}
\hline
$k$                                      & $h$                                      & admitted successors of $k\e$ \\ \hline
$\mathbf{0}$ &  $e^{B_i\e}(I+B_i\e)$, $1\leq i\leq 3$& see $k=A_i$\\ \hline
\multirow{2}{*}{$A_i$ (with $i=1,2,3$)}  & \multirow{2}{*}{$e^{B_i\e}(I+B_i\e)$} & $\{A_3,I,B_1,B_2\}+K\e$ \\ 
                                         &                          & $\{\mathbf{0},A_1,A_2,B_3\}+(\{A_1,A_2,A_3\}\setminus \{A_i\})\e$    \\ \hline
\end{tabular}
\end{center} 

Note that some parallelisms of the consonance $A_i\e$ are forbidden in this notion of counterpoint. The counterpoint notion that results from multiplication on the right is very similar.\footnote{To be more exact, this notion is obtained by applying the previous theory to the opposite ring $R^{op}$ instead of $R$. In that case, all right $R$-modules considered become left $R$-modules.}

\section{Local characterization equivalences}\label{loccharequiv}

In this section we compare our condition 2 in Definition~\ref{def} with Mazzola's original condition \cite[Definition~95]{MazzolaTopos}. 

The following result inspires the latter. It is a local version of the uniqueness property of a strong dichotomy. Observe that the conjugation in Proposition~\ref{conj} restricts to bijections $H(K[\x],D[\x])\longrightarrow H_z(K[\x],D[\x])$ between respective sets of symmetries sending $K[\x]$ to $D[\x]$ (check). 

\begin{theorem}\label{indxdichori}
Let $(K,D,e^ab)$ be a strong dichotomy of $R$. For each $z\in R$, $p^z[\x]$ is the unique $z$-invariant symmetry such that $(K[\x],D[\x],p^z[\x])$ is a self-complementary dichotomy of $R[\x]$. In particular, by Lemma~\ref{fiber}, 
\begin{equation*}
p^z[\x](z+K\x)=z+D\x.
\end{equation*}
\end{theorem}
\begin{proof}
Since self-complementary dichotomies $(K[\x],D[\x],p')$ with $p'$ $z$-invariant are in correspondence (conjugation) with self-complementary dichotomies $(K[\x],D[\x],p)$ where $p$ is $0$-invariant, the proof of the theorem reduces to the case when $z=0$ (Proposition~\ref{unique0ori}). There, $p^0[x]=e^{a\x}b$, so $p^z[\x]=e^z\circ e^{a\x}b\circ e^{-z}=e^{(1-b)z+ a\x}b$.
\end{proof}

\begin{proposition}\label{unique0ori}
Let $(K,D,e^ab)$ be a strong dichotomy of $R$. The symmetry $e^{a\x}b$ is the unique $0$-invariant symmetry such that $(K[\x],D[\x],e^{a\x}b)$ is a self-complementary dichotomy of $R[\x]$. 
\end{proposition}
\begin{proof}
Let $h$ be a $0$-invariant symmetry, where $h=e^{u+v\x}(c+d\x)$. By Proposition~\ref{charinv}, $u=0$. Moreover, if $h(K[\x])=D[\x]$, from Lemma~\ref{fiber}, we obtain that $(c+d\alpha)K+v=D$, so the condition that $e^ab$ is a polarity implies that $c+d\alpha=b$ and $v=a$. Further, we claim that $d=0$. In fact, by the condition that $h(K[\x])=D[\x]$,
\[e^{v\x}(c+d\x)(1+K\x)=c+((c+d\alpha)K+v+d)\x=c+(D+d)\x\subseteq D[\x]\]
and 
\[e^{v\x}(c+d\x)(-1+K\x)=-c+((c+d\alpha)K+v-d)\x=-c+(D-d)\x\subseteq D[\x],\]
so $D+d\subseteq D$ and $D-d\subseteq D$, and hence $D+d\subseteq D$ and $D\subseteq D+d$. Thus, $D=D+d$ and, since both $K$ and $D$ are rigid (Corollary~\ref{rig}), $d=0$. 

Up to now, we know that each $0$-invariant symmetry $h$ such that $h(K[\x])=D[\x]$ is necessarily $e^{a\x}b$. It remains to show that $(K[\x],D[\x],e^{a\x}b)$ is a self-complementary dichotomy. This was done in Proposition~\ref{dualexts}.
\end{proof}

\begin{lemma}\label{fiber} If $h$ is a $z$-invariant symmetry of $R[\x]$ such that $h(K[\x])=D[\x]$, then $h(z+K\x)=z+D\x$.
\end{lemma}
\begin{proof} Under our assumptions,
\[h(z+K\x)=h(K[\x]\cap(z+R\x))=h(K[\x])\cap(z+R\x)= D[\x]\cap (z+R\x)=z+D\x.\]
\end{proof}  

We have the following corollary of Theorem~\ref{indxdichori}.

\begin{corollary}\label{extrig}
Let $(K,D,e^ab)$ be a strong dichotomy of $R$. For each $z\in R$, $K[\x]$ is $H_z$-rigid, and $f(K[\x])=f'(K[\x])$ implies $f=f'$ whenever $f,f'\in H_z$.
\end{corollary}
\begin{proof}
As in the proof of Proposition~\ref{corstab}, there is a bijection between $H_z(K[\x],D[\x])$ and the stabilizer $\theta(K[\x])$ with respect to $H_z$. Thus, Theorem~\ref{indxdichori} implies that $\theta(K[\x])$ is trivial. 

Now, suppose that $f(K[\x])=f'(K[\x])$ with $f,f'$ in $H_z$. This implies that $f'^{-1}\circ f(K[\x])=K[\x]$, so $f'^{-1}\circ f=id$, and $f=f'$.
\end{proof}

If we apply the condition of Theorem~\ref{indxdichori} to a dichotomy $\{\tilde{K},\tilde{D}\}$ of $R[\x]$, we obtain the following one.
\begin{itemize}
\item The symmetry $p^z[\x]$ is the unique $p'\in H_z$ such that $(\tilde{K},\tilde{D},p')$ is a self-complementary dichotomy.
\end{itemize}
As explained in \cite[Section~14.2]{Beau}, it is a sort of characteristic   property of the partition $\{z+K\x,z+D\x\}$ in the case of the Renaissance dichotomy $(K,D,p)$. 

If we apply it to a deformed dichotomy $\{g(K[x]),g(D[x])\}$, for $g\in \sym(R[\x])$, we get the following condition.
\begin{itemize}
\item The symmetry $p^z[\x]$ is the unique $p'\in H_z$ such that $(g(K[x]),g(D[x]),p')$ is a self-complementary dichotomy.
\end{itemize}
First, let us prove that the latter is just Mazzola's condition 2.

\begin{proposition}\label{equivmazzcond}
Let $(K,D,p)$ be a strong dichotomy, $z\in R$, and $g\in \sym(R[\x])$. The following conditions are equivalent.\begin{enumerate} 
\item The symmetry $p^z[\x]$ is the unique $p'\in H_z$ such that $(g(K[\x]),g(D[\x]),p')$ is a self-complementary dichotomy.
\item The triple $(g(K[x]),g(D[x]),p^z[\x])$ is a self-complementary dichotomy.
\item The equation $p^z[\x]g_z=g_z p^z[\x]$ holds, where $g_z$ is as in Propostion~\ref{hache}.
\end{enumerate}
\end{proposition}
\begin{proof}
For each $p'\in H_z$, by Proposition~\ref{hache} and Corollary~\ref{extrig}, we have the following equivalences.
\begin{align*}
p'(g(K[\x])) =g(D[\x]) &\Leftrightarrow \\ p'(g_z(K[\x])) =g_z(D[\x]) &\Leftrightarrow \\ p'g_z(K[\x]) =g_zp^z[\x](K[\x])& \Leftrightarrow \\ p'g_z =g_zp^z[\x]
\end{align*}
The last (and hence the first) equation has a unique solution for the unknown $p'$, so 1 is equivalent to 3 and 2. 
\end{proof}

The condition of Proposition~\ref{equivmazzcond} is stronger than our characterizing condition in Definition~\ref{def}. In fact, if $p^z[\x](g(K[\x]))=g(D[\x])$, then 
\[p^z[\x](g(K[\x])\cap I_z)=p^z[\x](g(K[\x]))\cap p^z[\x](I_z)=g(D[\x])\cap I_z.\]
Finally, we establish that under the commutativity of $R$, both conditions are equivalent.

\begin{theorem}\label{equivcommu}
Let $(K,D,p)$, with $p=e^ab$, be a strong dichotomy of a commutative ring, $z\in R$, and $g\in \sym(R[\x])$. The following equations are equivalent.
\begin{enumerate} 
\item $p^z[\x](g(K[\x]))=g(D[\x])$
\item $p^z[\x](g(K[\x])\cap I_z)=p^z[\x](g(D[\x]))\cap I_z$
\end{enumerate}
\end{theorem}
\begin{proof}
Suppose that $g=e^{u+v\x}(c+d\x)$ and write $w=v+dc^{-1}(z-u)$. By unraveling the condition~3 in Proposition~\ref{equivmazzcond}, which is just 1, it is equivalent to the following set of equations.
\[\left\lbrace  \begin{matrix}
bw+a = (c+d\alpha)a+w\\
bc = cb\\
bd = db\\
 \end{matrix} \right.\]
On the other hand, 2 is equivalent to the condition~3 in Theorem~\ref{equivcond}, that is, to the following system.
\[\left\lbrace  \begin{matrix}
bw+a = (c+d\alpha)a+w\\
b(c+d\alpha) = (c+d\alpha)b\\
\end{matrix} \right.\]
Hence, 1 and 2 are equivalent when $R$ is commutative.
\end{proof}

\section{Variations of the model}\label{vars}

The validity of Definition~\ref{def} for the case $\alpha=1$ of the contrapuntal intervals ring, and the motivations in Sections~\ref{alternation}, \ref{locchar}, and \ref{variety}, give rise to the following questions on the classical model.
\begin{itemize}
\item Are dual numbers ($\alpha=0$) the most appropriate structure to model contrapuntal intervals? According to Mazzola, the discantus is a tangential alteration of the cantus firmus, so this principle leads to dual numbers, which are the natural tangents in algebraic geometry. However, we could ask why this kind of alteration is involved in counterpoint and not another one.

\item Why do we not require the local characterization condition on $\{g(K[\x]),g(D[\x])\}$ for the cantus firmus of a possible successor $\eta$?

\item More radically, why do we not require the local characterization on $\{g(K[\x]),g(D[\x])\}$ for all fibers?

\end{itemize} 

These questions lead to the following three variations of the model. We indicate how to compute the contrapuntal symmetries for the Renaissance dichotomy in each case. The musicological analysis of these results is in \cite{Musicology}.

\subsection{First variation}\label{1var}
This variation corresponds to Definition~\ref{def} with $\alpha=1$. 

First, let us note that we can relate the models for $\alpha=1$ and $\alpha=0$. Certainly, there is an injection $\phi:e^{v\x}(c+d\x)\mapsto e^{v\e}((c+d)+d\e)$ from the set of symmetries in $H$ satisfying\footnote{See Sections~\ref{1con} and \ref{2con} for explicit forms of these conditions.} 1 and 2 in Theorem~\ref{redu} for $\alpha=1$ to the set of symmetries in $H$ satisfying 1 and 2 for $\alpha=0$. Moreover, $\phi$ preserves the cardinality of successors sets. In fact, if $h=e^{v\x}(c+d\x)$, then 
\[|h(K[\x])\cap K[\x]|=\sum\limits_{r\in R}|((c+d)K+v+dr)\cap K|= |\phi(h)(K[\e])\cap K[\e]|.\]

Thus, if $M_0$ and $M_1$ are the maximum cardinalities according 3 in Theorem~\ref{redu} for $\alpha=0$ and $\alpha=1$, respectively, then $M_0\geq M_1$.

Moreover, if there is a contrapuntal symmetry in the image of $\phi$, like in Table~\ref{tab:adsu}, then $M_0=M_1$. In this case, we can compute the contrapuntal symmetries for $\alpha=1$ as all inverse images of contrapuntal symmetries for $\alpha=0$.

Also, note that $e^{v\e}(c+d\e)$ is in the image of $\phi$ if and only if $c-d\in R^*$, in which case its inverse image is $e^{v\x}((c-d)+d\x)$.

\subsection{Second variation}\label{2var}

If we also require the local characterization on the fibers of the possible successors of a consonance, then we change condition 3 in Definition~\ref{def} for the following one, where $P(z)$ is the property 2 in Definition~\ref{def}.
\begin{itemize}
\item The cardinality of $\{z'+k'\x \in g(K[\x])\cap K[\x]\ |\ P(z')\}$ is maximum among all $g$ satisfying 1 and 2. 
\end{itemize}

By following the same steps to prove Theorem~\ref{redu}, we obtain (check) the following simplification of this definition.

\begin{theorem}\label{redu2} The admitted successors of $k\x\in K[\x]$ are the elements of the sets $T$ of the form
\[\{z+k\x\in h(K[\x])\cap K[\x])\ |\ P(z)\},\]
where $h=e^{v\x}(c+d\x)\in H$ and 
\begin{itemize}
\item[(a2)] $k\x\in h(D[\x])$, 
\item[(b2)] $p\circ e^v(c+d\alpha)=e^v(c+d\alpha)\circ p$, and 
\item[(c2)] the cardinality of $T$ is maximum among all $h\in H$ satisfying (a2) and (b2). 
\end{itemize}
\end{theorem}
Suppose that $h=e^{v\x}(c+d\x)\in H$. Note that once (b2), which is just $P(0)$, holds, the proof of Theorem~\ref{equivcommu} implies that $P(z)$ is equivalent to $bdc^{-1}z=dc^{-1}z$. In turn, $P(cr)$ is equivalent to $bdr=dr$, and the latter is equivalent to $P((c+d)r)$ whenever $(c+d)\in R^*$ and $R$ is commutative.

Asumme that $R$ is commutative. We relate the cases $\alpha=1$ and $\alpha=0$ by means of the injection $\phi$ (Section~\ref{1var}) again. In fact, it preserves (a2) and (b2) as it did for the first variation. Also, the cardinalities of the successors sets $T$ and $T'$ associated with $h$ (subject to (a2) and (b2)) and $\phi(h)$, respectively, are equal by the following equations, where $V=\{r\in R\ |\ bdr=dr\}$.
\begin{align*}
|T| &= \left\vert\bigcup\limits_{r\in V}cr+(((c+d)K+v+dr)\cap K)\e\right\vert\\
&= \left\vert\bigcup\limits_{r\in V}(c+d)r+(((c+d)K+v+dr)\cap K)\x\right\vert
= |T'|
\end{align*}
For the Renaissance dichotomy, in the following section we observe that the dual numbers model has all contrapuntal symmetries in the image of $\phi$, so, as in Section~\ref{1var}, the contrapuntal symmetries for $\alpha=1$ are the inverse images of those for $\alpha=0$. But, by collecting these inverse images, we obtain the same original set of symmetries up to indeterminate names $\x,\e$ (see Table~\ref{tab:adsufin}), so \textit{the contrapuntal symmetries coincide for $\alpha=0$ and $\alpha=1$ in the case of the Renaissance dichotomy}.

\subsection{Third variation}

If we require the local characterization property on the deformed dichotomy for all fibers, which amounts to a \textit{global property}, then we change condition 2 of Definition~\ref{def} for the following one.
\begin{itemize}
\item For all $z\in R$, $P(z)$ holds.
\end{itemize}

The respective simplification is the following theorem.
\begin{theorem}\label{redu3} The admitted successors of $k\x\in K[\x]$ are the elements of the sets $S$ of the form
\[h(K[\x])\cap K[\x],\]
where $h\in H$ and 
\begin{itemize}
\item[(a3)] $k\x\in h(D[\x])$, 
\item[(b3)] for all $z\in R$, $P(z)$ holds, and 
\item[(c3)] the cardinality of $S$ is maximum among all $h\in H$ satisfying (a3) and (b3). 
\end{itemize}
\end{theorem}

According to the proof of Theorem~\ref{equivcommu}, if $h=e^{v\x}(c+d\x)$, the condition (b3) just says that (b2) is true and that $bdc^{-1}z=dc^{-1}z$ holds for all $z\in R$. Hence, (b3) is equivalent to (b2) and $bd=d$.

For the dual numbers case, we next prove that \textit{the second and third variations coincide if $R=\mathbb{Z}_n$ and each consonance has at least an admitted successor in the third variation}, like in the case of the Renaissance dichotomy; see Table~\ref{tab:adsufin}. On the one hand, if $h$ satisfies (a3) and (b3), then it satisfies (a2) and (b2) and the successors sets (cardinalities) are the same for both definitions. In particular, each consonance has at least an admitted successor in the second variation too. On the other hand, if $h\in H$ with $h=e^{v\e}(c+d\e)$ is a contrapuntal symmetry in the second variation, then the set of all $z\in R$ such that $P(z)$ holds, that is, such that
\[bdz=dz,\]
is a principal ideal\footnote{In fact, the solution set of $ax=0$ in $\mathbb{Z}_n$ is $\langle o\rangle$, where $o=n/\gcd(a,n)$.} $\langle o\rangle$ in $R$. We have two possibilities. If $\langle o\rangle=R$, then $h$ satisfies (a3) and (b3). Otherwise, $\langle o\rangle\subsetneq R$ and the set $T$ (which is nonempty) from Theorem~\ref{redu2} satisfies
\begin{align*}
|T| &= \left\vert\bigcup_{z\in \langle o\rangle} z+((cK+v+dc^{-1}z)\cap K)\e\right\vert\\
&= \sum_{z\in \langle o\rangle}\left\vert (cK+v+dc^{-1}z)\cap K\right\vert\\
  &< \rho\sum_{z\in \langle o\rangle}\left\vert (cK+v+dc^{-1}z)\cap K\right\vert\\  
  &= \sum_{r\in R}\left\vert (cK+v+dc^{-1}or)\cap K\right\vert\\  
    &=\left\vert\bigcup_{r\in R} r+((cK+v+dc^{-1}or)\cap K)\e\right\vert\\
    &= |e^{v\e}(c+do\e)(K[\e])\cap K[\e]|,
\end{align*}
where $\rho=|\Ker(o\cdot-)|=|R|/|\langle o\rangle|>1$ and $bdo=do$. We deduce that $h$ satisfies (a3) and (b3), or there is $h'$ satisfying (a3) and (b3) such that its successor set has a strictly greater cardinality than the successors set of $h$. Our observations imply that the contrapuntal symmetries coincide for the second and third variations. 

In the Renaissance dichotomy case, by the computation of those symmetries (Table~\ref{tab:adsufin}), they are in the image of $\phi$. The latter preserves the conditions (a3) and (b3) and successors sets cardinalities from $\alpha=1$ to $\alpha=0$, so the contrapuntal symmetries coincide for $\alpha=1$ and $\alpha=0$, as in Section~\ref{2var}.

\begin{table}
\begin{center}
\begin{tabular}{|c|c|c|}
\hline
$k$ & $|h(K[\e])\cap K[\e]|$ & $h$ \\ \hline
\multirow{3}{*}{$0$} & \multirow{3}{*}{$48$} &  $e^{6\e}(1+6\e)$ \\ 
 & & $e^{6\e}(7+6\e)$  \\ 
  &  &   $e^{11\e}11$ \\ \hline
$3$& $48$ &  $e^{8\e}5$ \\ \hline
\multirow{2}{*}{$4$}& \multirow{2}{*}{$48$} & $e^{6\e}(1+6\e)$\\ 
 & & $e^{6\e}(7+6\e)$\\ \hline
$7$ & $60$ & $e^07$ \\ \hline
\multirow{5}{*}{$8$} & \multirow{3}{*}{$48$} & $e^{3\e}7$\\ 
                     & & $e^{6\e}(1+6\e)$\\ 
                    &  & $e^{6\e}(7+6\e)$ \\\hline
\multirow{2}{*}{$9$}& \multirow{2}{*}{$48$} & $e^{11\e}11$\\ 
       &  & $e^{3\e}7$  \\ 
       &  & $e^{8\e}5$  \\ \hline
\end{tabular}
\end{center} 
\caption{Contrapuntal symmetries for the Renaissance dichotomy in the third variation and the dual numbers case. The computation, by hand, is akin to that of Table~\ref{tab:adsu}---now we just add the condition $5d=d$ for contrapuntal symmetries, that is, $d=0,3,6,9$ or $\rho=3,6,12$. }
\label{tab:adsufin}
\end{table}

To summarize, \textit{the second and third variations coincide for the Renaissance dichotomy}. 

\section{Appendix}\label{ap}
\addcontentsline{toc}{section}{\nameref{ap}}

\begin{theorem}[\textbf{Rearrangement inequality, variation}]\label{r.i.}
Suppose given an ordered list of real numbers
\[a_1\geq a_2\geq \dots \geq a_n.\]
The inequality
\[\sum\limits_{i=1}^n a_i^2\geq \sum\limits_{i=1}^n a_ia_{\sigma(i)}\]
holds for each permutation $\sigma$ of $\{1,\dots, n\}$. Moreover, the equality holds if and only if $(a_1,\dots,a_n)=(a_{\sigma(1)},\dots, a_{\sigma(n)})$.
\end{theorem}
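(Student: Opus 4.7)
The plan is to prove the inequality by turning the difference $\sum a_i^2-\sum a_i a_{\sigma(i)}$ into an explicit sum of squares, bypassing the classical rearrangement inequality entirely. This makes the ordering hypothesis (which is convenient for applications but not logically necessary for the stated inequality) cost-free, and it makes the equality analysis essentially automatic.

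Concretely, the first step is the observation that since $\sigma$ is a permutation of $\{1,\dots,n\}$, the reindexing $j=\sigma(i)$ gives
\[
\sum_{i=1}^n a_{\sigma(i)}^2=\sum_{j=1}^n a_j^2.
\]
Adding $\sum a_i^2=\sum a_{\sigma(i)}^2$ to itself and subtracting $2\sum a_i a_{\sigma(i)}$ therefore yields the identity
\[
2\left(\sum_{i=1}^n a_i^2-\sum_{i=1}^n a_i a_{\sigma(i)}\right)=\sum_{i=1}^n\bigl(a_i-a_{\sigma(i)}\bigr)^2.
\]
Since the right-hand side is a sum of squares of real numbers, it is non-negative, which gives the desired inequality.

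For the equality case, the identity above shows that $\sum a_i^2=\sum a_i a_{\sigma(i)}$ holds if and only if $(a_i-a_{\sigma(i)})^2=0$ for every $i$, i.e.\ $a_i=a_{\sigma(i)}$ for all $i$; this is precisely the statement that the tuple $(a_1,\dots,a_n)$ coincides with $(a_{\sigma(1)},\dots,a_{\sigma(n)})$ componentwise.

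There is no real obstacle here: the only subtlety is to notice that the hypothesis $a_1\ge\cdots\ge a_n$ plays no role in the argument (so one could even drop it in the statement), and that the equality characterization emerges directly from the sum-of-squares identity without needing to compare $\sigma$ with the identity permutation. I would not invoke the classical rearrangement inequality, whose equality case is more delicate when the $a_i$ have repetitions.
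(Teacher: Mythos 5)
Your proof is correct, and it takes a genuinely different route from the paper. You reduce everything to the algebraic identity
\begin{equation*}
2\Bigl(\sum_{i=1}^n a_i^2-\sum_{i=1}^n a_i a_{\sigma(i)}\Bigr)=\sum_{i=1}^n\bigl(a_i-a_{\sigma(i)}\bigr)^2,
\end{equation*}
which is valid because $\sum_i a_{\sigma(i)}^2=\sum_i a_i^2$ for any permutation $\sigma$; non-negativity of the right-hand side gives the inequality, and vanishing of each square gives exactly the stated equality case. The paper instead runs the classical rearrangement-inequality argument: induction on $n$ with a case split on whether $a_{\sigma(1)}=a_1$, and a transposition (swap) step showing that moving $a_1$ into position $1$ strictly increases the sum when $a_{\sigma(1)}\neq a_1$. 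Your sum-of-squares argument is shorter, handles the equality case automatically (the paper's swap argument needs some care precisely when values repeat), and, as you observe, makes the hypothesis $a_1\geq\dots\geq a_n$ superfluous for this particular statement; what it gives up is generality, since it exploits that both sequences in the bilinear sum are the same, whereas the paper's inductive swap technique is the one that extends to the full rearrangement inequality $\sum a_i b_i\geq\sum a_i b_{\sigma(i)}$ for two distinct sorted sequences. For the use made of the theorem in the maximization criterion (Theorem~\ref{max}), your argument suffices.
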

\begin{proof}
Induction on $n$. The case $n=1$ is just the equality $a_1^2=a_1^2$. Now suppose the result for $n-1$. We consider two cases. 

\textit{First case}: $a_{\sigma(1)}=a_1$. We can assume $\sigma(1)=1$ (switch $\sigma(1)$ and $1$) since this do not alter the sum $\sum\limits_{i=1}^n a_ia_{\sigma(i)}$. In this case, 
\[\sum\limits_{i=1}^n a_ia_{\sigma(i)}=a_1^2+\sum\limits_{i=2}^n a_ia_{\sigma(i)}=a_1^2+\sum\limits_{i=1}^{n-1} a'_ia'_{\sigma'(i)},\]
where $a'_i=a_{i+1}$ and $\sigma'(i)=\sigma(i+1)-1$ for each $i=1, \dots , n-1$. Thus, by applying the induction hypothesis,
\[\sum\limits_{i=2}^{n} a_ia_{\sigma(i)}\leq \sum\limits_{i=2}^{n} a_i^2\]
and the equality holds if and only if $(a_2,\dots ,a_n)=(a_{\sigma(2)},\dots, a_{\sigma(n)})$. Hence, by adding $a_1^2$ on both sides, 
\[\sum\limits_{i=1}^{n} a_ia_{\sigma(i)}\leq \sum\limits_{i=1}^{n} a_i^2\]
and the equality holds if and only if $(a_1,\dots ,a_n)=(a_{\sigma(1)},\dots, a_{\sigma(n)})$.

\textit{Second case}: $a_{\sigma(1)}\neq a_1$. Note that there is $k$ with $a_{\sigma(k)}=a_1$ such that $a_k< a_1$; otherwise $\{k\ |\ a_{\sigma(k)}=a_1\}\subseteq \{k\ |\ a_k=a_1\}$ and, since these sets have the same finite cardinality, they are equal, which is a contradiction since $1$ is in the latter but not in the former. Thus, 
\[\sum\limits_{i=1}^n a_ia_{\sigma(i)}=a_1a_{\sigma(1)}+a_k a_{\sigma(k)}+\sum\limits_{i\in\{1,\dots, n\}\setminus \{1,k\}} a_ia_{\sigma(i)}\]
and, if $\sigma'$ is obtained from $\sigma$ by switching $\sigma(1)$ and $\sigma(k)$, then
\[\sum\limits_{i=1}^n a_ia_{\sigma'(i)}-\sum\limits_{i=1}^n a_ia_{\sigma(i)}=(a_1-a_k)(a_{\sigma(k)}-a_{\sigma(1)})=(a_1-a_k)(a_1-a_{\sigma(1)})>0.\]
This means that 
\[\sum\limits_{i=1}^n a_ia_{\sigma(i)}<\sum\limits_{i=1}^n a_ia_{\sigma'(i)}\leq \sum\limits_{i=1}^n a_i^2,\]
where the right-hand inequality corresponds to the first case. Finally, note that here the inequality is always strict and $(a_1,\dots ,a_n)\neq(a_{\sigma(1)},\dots, a_{\sigma(n)})$. 
\end{proof}

\section*{Funding}
\addcontentsline{toc}{section}{Funding}

This work was supported by Programa de Becas Posdoctorales en la UNAM 2019, which is coordinated by Direcci\'{o}n de 
Asuntos del Personal Acad\'{e}mico (DGAPA) at Universidad Nacional Aut\'{o}noma de M\'{e}xico. 

\section*{Disclosure statement}
\addcontentsline{toc}{section}{Disclosure statement}
No potential conflict of interest was reported by the authors.

\bibliography{MySubmissionBibTexDatabase}

\end{document}